\theoremstyle{plain}
\newtheorem{lemma}{Lemma}[section]
\newtheorem{theorem}[lemma]{Theorem}
\newtheorem{corollary}[lemma]{Corollary}
\theoremstyle{definition}
\newtheorem{definition}[lemma]{Definition}
\newtheorem{remark}[lemma]{Remark}
\theoremstyle{remark}
\newcommand{\R}{\mathbb{R}}
\newcommand{\C}{\mathbb{C}}
\newcommand{\Z}{\mathbb{Z}}
\newcommand{\D}{\mathbb{D}}
\newcommand{\link}{\operatorname{link}}
\renewcommand{\sl}{\operatorname{sl}}
\newcommand{\wind}{\operatorname{wind}}
\newcommand{\CZ}{\operatorname{CZ}}
\newcommand{\ECH}{\operatorname{ECH}}
\newcommand{\ECC}{\operatorname{ECC}}
\renewcommand{\P}{\mathcal{P}}
\newcommand{\action}{\mathcal{A}}
\newcommand{\jbar}{\bar{J}}
\newcommand{\jtil}{\widetilde{J}}
\newcommand{\util}{\widetilde{u}}
\newcommand{\vtil}{\widetilde{v}}
\newcommand{\wtil}{\widetilde{w}}
\newcommand{\Mfast}{\mathcal{M}^{\rm fast}}
\newcommand{\W}{\overline{W}}
\numberwithin{equation}{section}
\begin{document}

\title[]{Hopf orbits and the first ECH capacity}
\author[]{Umberto Hryniewicz, Michael Hutchings, and Vinicius G. B. Ramos}
\date{\today}

\begin{abstract}
We consider dynamically convex star-shaped domains in a symplectic vector space of dimension~$4$. For such a domain, a ``Hopf orbit'' is a closed characteristic in the boundary which is unknotted and has self-linking number~$-1$. We show that the minimum action among Hopf orbits exists and defines a symplectic capacity for dynamically convex star-shaped domains. We further show that this capacity agrees with the first ECH capacity for such domains. Combined with a result of Edtmair~\cite{gss_edtmair}, this implies that for dynamically convex star-shaped domains in four dimensions, the first ECH capacity agrees with the cylinder capacity. 
This also provides a method to show that the first ECH capacity of a dynamically convex star-shaped domain satisfies the axioms of a normalized symplectic capacity without any need for Seiberg-Witten theory.
\end{abstract}


\maketitle

\tableofcontents

\section{Introduction}

The theme of this paper is the study of symplectic capacities of dynamically convex star-shaped domains in a $4$-dimensional symplectic vector space. In order to state our results and related questions we first need to recall a few basic notions.

\subsection{Symplectic capacities}

Recall that if $(X_1,\omega_1)$ and $(X_2,\omega_2)$ are symplectic manifolds of the same dimension, a {\bf symplectic embedding\/} of $X_1$ into $X_2$ is a smooth embedding $\varphi:X_1\hookrightarrow X_2$  such that $\varphi^*\omega_2=\omega_1$. In this paper, the symplectic manifolds that we consider will usually be domains\footnote{Here we define a ``domain'' in $\R^{2n}$ to be the closure of a nonempty open set.} in $\R^{2n} = \C^n$ with smooth boundary, usually with $n=2$, which we always equip with the restriction of the standard symplectic form
\[
\omega_0 = \sum_{j=1}^n dx_j \wedge dy_j.
\]
Here $(x_1,\dots,x_n,y_1,\dots,y_n)$ denote the coordinates in $\R^{2n}$. The corresponding coordinates in $\C^n$ are denoted by $(z_1,\dots,z_n)$, where $z_j = x_j + iy_j \in \C$. In particular, define the ball
\[
B^{2n}(r)=\{z\in\C^n\mid \pi|z|^2\le r\}
\]
and the cylinder
\[
Z^{2n}(R)=\{z\in\C^n\mid \pi|z_1|^2\le R\}.
\]
In 1985 Gromov proved the celebrated nonsqueezing theorem~\cite{gromov}, asserting that there exists a symplectic embedding $B^{2n}(r)\hookrightarrow Z^{2n}(R)$ if and only if $r\le R$. Since then, symplectic embeddings have been extensively studied, see e.g.\ \cite{schlenk}.

One of the main tools in this study are symplectic capacities, see e.g.\ the survey~\cite{chls}. Definitions of this term in the literature vary, and we will use the following:

\begin{definition}
A {\bf symplectic capacity\/} defined in dimension $2n$ is a function~$c$ from some nonempty set of $2n$-dimensional symplectic manifolds to $[0,\infty]$, such that:
\begin{itemize}
\item (Monotonicity) If $(X_1,\omega_1)$ and $(X_2,\omega_2)$ are symplectic manifolds of dimension $2n$ for which $c$ is defined, and if there exists a symplectic embedding $X_1\hookrightarrow X_2$, then
\begin{equation}
\label{eqn:monotonicity}
c(X_1)\le c(X_2).
\end{equation}
\item (Conformality) If $r>0$ and if $c$ is defined on $(X,\omega)$, then
\begin{equation}
\label{eqn:conformality}
c(X,r\omega)=r c(X,\omega).
\end{equation}
In particular, if $X$ is a domain in $\R^{2n}$ then $c(rX)=r^2c(X)$ for all $r \in \mathbb{R} \setminus \{0\}$.
\end{itemize}
We say that a symplectic capacity $c$ is {\bf normalized\/} if in addition
\[
c(B^{2n}(r)) = c(Z^{2n}(r)) = r.
\]
\end{definition}

The simplest examples of normalized symplectic capacities are the \textbf{Gromov width}~$c_B$, sometimes also referred to as the \textbf{ball capacity}, and the {\bf cylinder capacity\/}~$c_Z$. If $\operatorname{dim}(X)=2n$, then these are defined by
\begin{eqnarray}
\nonumber
    c_{B}(X,\omega) &= \sup\{r \mid \mbox{$\exists$ symplectic embedding $B^{2n}(r)\hookrightarrow (X,\omega)$}\},\\
\label{eqn:cZ}
    c_Z(X,\omega) &= \inf\{R \mid \mbox{$\exists$ symplectic embedding $(X,\omega)\hookrightarrow Z^{2n}(R)$}\}.
\end{eqnarray}
The statements that $c_B$ and $c_Z$ are normalized symplectic capacities are equivalent to Gromov's nonsqueezing theorem~\cite{gromov}. In addition, it follows from the definitions that if $c$ is a normalized symplectic capacity defined on $(X,\omega)$, then
\begin{equation}
\label{eq:cap}
c_B(X,\omega)\le c(X,\omega)\le c_Z(X,\omega).
\end{equation}

Various symplectic capacities are defined in terms of Reeb dynamics and contact homology. To say more about this, we need the following definitions.

\begin{definition}
A {\bf star-shaped domain\/} is a compact domain $X\subset\R^{2n}$ with smooth connected boundary such that $0\in\operatorname{int}(X)$ and $\partial X$ is transverse to the radial vector field.
\end{definition}

If $X$ is a star-shaped domain as above, then the standard Liouville form on $\R^{2n}$ defined by
\[
\lambda_0 = \frac{1}{2}\sum_{i=1}^n\left(x_i\,dy_i - y_i\,dx_i\right)
\]
restricts to a contact form $\lambda$ on $\partial X$. The contact form $\lambda$ determines the {\bf Reeb vector field\/} $R_\lambda$ characterized by $d\lambda(R_\lambda,\cdot)=0$ and $\lambda(R_\lambda)=1$. A {\bf Reeb orbit\/} is a periodic orbit of $R_\lambda$, i.e.\ a map $\gamma:\R/T\Z\to\partial X$ for some $T>0$, modulo 
translations of $\R/T\Z$, such that $\dot\gamma(t)=R_\lambda(\gamma(t))$ for all $t$. The Reeb orbit $\gamma$ is {\bf simple\/} if the map $\gamma$ is an embedding, i.e. $T>0$ is the smallest positive period. The {\bf symplectic action\/} $\mathcal{A}(\gamma)>0$ is defined to be the period $T = \int_\gamma \lambda$. By a theorem of Rabinowitz \cite{rabinowitz}, at least one Reeb orbit exists. Let $\mathcal{P}(\partial X)$ denote the set of Reeb orbits in $\partial X$, and define
\[
\mathcal{A}_{\operatorname{\min}}(X) = \min_{\gamma\in\mathcal{P}(\partial X)}\mathcal{A}(\gamma).
\]
The \textbf{action spectrum} of the star-shaped domain~$X$ is defined to be the set of periods of Reeb orbits in~$\partial X$. A symplectic capacity~$c$ is said to be \textbf{spectral} on star-shaped domains if for every star-shaped domain $X$ we have \begin{equation}
\label{eqn:spectralcapacity}
c(X) = \sum_i\mathcal{A}(\gamma_i)
\end{equation}
for some finite collection $\{\gamma_i\}$ of Reeb orbits in $\partial X$.


The ECH capacities are a sequence of symplectic capacities $\{c_k^{\operatorname{ECH}}\}_{\ge 0}$ defined in four dimensions \cite{hut_qech}. If $(X,\omega)$ is a four-dimensional symplectic manifold, then
\[
0 = c_0^{\operatorname{ECH}}(X,\omega) < c_1^{\operatorname{ECH}}(X,\omega) \le c_2^{\operatorname{ECH}}(X,\omega) \le \cdots \le \infty.
\]
For each $k$ the capacity $c_k^{\rm ECH}$ is spectral on star-shaped domains, i.e. if $X\subset \R^4$ is a star-shaped domain then~\eqref{eqn:spectralcapacity} holds, where
the  finite set of Reeb orbits $\{\gamma_i\}$ on $\partial X$ is selected homologically by embedded contact homology. The first ECH capacity $c_1^{\operatorname{ECH}}$ is normalized, and we will review its definition for star-shaped domains in~\S\ref{sec:ckech}.

There are also ``alternative ECH capacities'' $c_k^{\operatorname{Alt}}$ defined by a more elementary max-min formula in~\cite{altech}, which satisfy the same basic properties as those of the ECH capacities listed above and agree with them on basic examples. If $X$ is a star-shaped domain, then by \cite{altech}*{Thm.~12} we have
\[
c_k^{\operatorname{Alt}}(X) \le c_k^{\operatorname{ECH}}(X).
\]

Some additional sequences of spectral symplectic capacities, defined for star-shaped domains in $\R^{2n}$, are the Ekeland-Hofer capacities $\{c_k^{\operatorname{EH}}\}_{k\ge 1}$ defined in \cite{ekelandhofer} using variational methods. Recent work in~\cite{guttramos} establishes that these capacities are equal to the capacities $\{c_k^{\operatorname{CH}}\}_{k\ge 1}$ defined in \cite{gutthutchings} using equivariant symplectic homology. The capacity $c_1^{\operatorname{EH}} = c_1^{\operatorname{CH}}$ is normalized. 

\subsection{The Hopf capacity}

Let $X\subset\R^4$ be a star-shaped domain and let $\lambda$ denote the contact form on $\partial X$. The contact structure $\xi = \operatorname{Ker}(\lambda)$ has, up to homotopy, a unique symplectic trivialization $\tau$ over $\partial X$. If $\gamma$ is a Reeb orbit, then the linearized Reeb flow along $\gamma$ has a well-defined rotation number with respect to $\tau$, which we denote by $\rho(\gamma)\in\R$; see \cite[\S1.2]{Hutchings_Ruelle} for details.

\begin{definition}
A star-shaped domain $X\subset\R^4$ is {\bf dynamically convex\/} if every Reeb orbit $\gamma$ in $\partial X$ has rotation number $\rho(\gamma)>1$.
\end{definition}

\begin{remark}
The definition of dynamical convexity is more commonly stated as the condition that every Reeb orbit $\gamma$ in $\partial X$ has Conley-Zehnder index $\operatorname{CZ}(\gamma)\ge 3$. Here if $\gamma$ is a nondegenerate Reeb orbit (see \S\ref{sec:preliminaries} for the definition of ``nondegenerate''), then
\begin{equation}
\label{eqn:CZ}
\operatorname{CZ}(\gamma)=\lfloor\rho(\gamma)\rfloor + \lceil\rho(\gamma)\rceil.
\end{equation}
If $\gamma$ is degenerate, which in particular implies that $\rho(\gamma)$ is an integer, then there are different possible conventions for the Conley-Zehnder index. We will use the maximal lower semicontinuous extension of the nondegenerate Conley-Zehnder index, which in the degenerate case sometimes agrees with \eqref{eqn:CZ} and sometimes is $1$ less; see \S\ref{sec:CZ} for the precise definition. With this convention, $\rho(\gamma)>1$ is equivalent to $\operatorname{CZ}(\gamma)\ge 3$.
\end{remark}

The notion of dynamical convexity was introduced by Hofer, Wysocki and Zehnder in~\cite{convex}, as a symplectomorphism-invariant alternative to the notion of convexity. It is shown in \cite{convex} that every convex star-shaped domain in $\R^4$ whose boundary is positively curved is dynamically convex. For a long time it was not known whether, conversely, every dynamically convex domain is symplectomorphic to a convex domain. It was recently shown by Chaidez and Edtmair in~\cite{chaidezedtmair} that the answer is no; see \cite{dgrz,c-gh} for further developments. 


Recall that the {\bf self-linking number\/} of a simple Reeb orbit in $\partial X$, or more generally any knot transverse to the contact structure $\xi$, is its linking number with a pushoff via the framing induced by the global trivialization $\tau$ of $\xi$.

\begin{definition}
A {\bf Hopf orbit\/} is a simple Reeb orbit $\gamma$ in $\partial X$ which is unknotted and has self-linking number $-1$. Let $\mathcal{P}_{\operatorname{Hopf}}(\partial X)$ denote the set of Hopf orbits in~$\partial X$.
\end{definition}

\begin{remark}
\label{rem:gss}
It is shown in \cite{convex} that if $X$ is dynamically convex,  then the Reeb flow on $\partial X$ admits a {\bf disk-like global surface of section\/}. This is an embedded disk $D\subset\partial X$ such that $\partial D$ is a Reeb orbit $\gamma$, the Reeb flow is transverse to $\mathrm{int}(D)$, and for every $x\in\partial X \setminus \gamma$, the Reeb trajectory through $x$ intersects $\mathrm{int}(D)$ forwards and backwards in time. It follows from these conditions that $\gamma$ is a Hopf orbit. Conversely, it is shown in \cite{hryn_jsg} that if $X$ is dynamically convex, then every Hopf orbit in $\partial X$ bounds a disk-like global surface of section for the Reeb flow.
\end{remark}

\begin{definition}
If $X\subset\R^4$ is a dynamically convex domain, define
\[
\action_{\rm Hopf}(X) = \inf \ \{ \action(\gamma) \mid \gamma \in \P_{\rm Hopf}(\partial X) \}.
\]
\end{definition}

We can use $\action_{\rm Hopf}$ in a standard way to define an invariant of symplectic $4$-manifolds as follows.

\begin{definition}
If $(X,\omega)$ is a symplectic four-manifold, define
\begin{equation}
\label{c_Hopf}
c_{\rm Hopf}(X,\omega) = \sup \{ \mathcal{A}_{\rm Hopf}(\Omega)\mid \Omega \text{ is dynamically convex, }(\Omega,\omega_0) \hookrightarrow(X,\omega)\}
\end{equation}
where the supremum is taken over all dynamically convex star-shaped domains $\Omega \subset \R^4$ such that $(\Omega,\omega_0)$ symplectically embeds into $(X,\omega)$.
\end{definition}

Our first result is the following.

\begin{theorem}
\label{main_fast_capacity}
The invariant $c_{\rm Hopf}$ is a normalized symplectic capacity. Moreover, if $X\subset \R^4$ is a dynamically convex domain, then
\[
c_{\rm Hopf}(X) = \action_{\rm Hopf}(X) = \action(\gamma)
\]
for some $\gamma \in \P_{\rm Hopf}(X)$ satisfying $\CZ(\gamma)=3$.
\end{theorem}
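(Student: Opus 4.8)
The plan is to isolate the one piece of genuine geometry—monotonicity of $\action_{\rm Hopf}$ under symplectic embeddings of dynamically convex domains—and deduce everything else formally. Monotonicity and conformality of $c_{\rm Hopf}$ are immediate from \eqref{c_Hopf}: if $(X_1,\omega_1)\hookrightarrow(X_2,\omega_2)$ then every dynamically convex $\Omega$ that embeds into $X_1$ also embeds into $X_2$, so the supremum defining $c_{\rm Hopf}(X_2)$ is taken over a larger family; and conformality follows from the scaling relation $\action_{\rm Hopf}(r\Omega)=r^2\action_{\rm Hopf}(\Omega)$, which holds because dilation by $r$ sends $\lambda_0$ to $r^2\lambda_0$, hence multiplies all Reeb periods by $r^2$ while preserving unknottedness and self-linking. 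The ``moreover'' equality then reduces to the claim that $\action_{\rm Hopf}$ is itself monotone among dynamically convex domains: granting $\action_{\rm Hopf}(\Omega)\le\action_{\rm Hopf}(X)$ whenever $\Omega\hookrightarrow X$, the identity embedding gives $c_{\rm Hopf}(X)=\sup_{\Omega}\action_{\rm Hopf}(\Omega)=\action_{\rm Hopf}(X)$.

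Before the crux I would settle that the infimum is attained by a $\CZ=3$ orbit. By Remark~\ref{rem:gss} a dynamically convex $X$ admits a disk-like global surface of section whose binding is a Hopf orbit, so $\P_{\rm Hopf}(\partial X)\neq\varnothing$ and $\action_{\rm Hopf}(X)<\infty$. After a $C^\infty$-small nondegenerate perturbation I would work with the moduli space $\Mfast$ of fast finite energy planes in the completion $\overline{X}$; by Hryniewicz's theory these are the holomorphic incarnations of the surfaces of section of Remark~\ref{rem:gss}, each is embedded and asymptotic to a single Hopf orbit, and the fastness (extremal asymptotic winding) forces that orbit to have $\CZ=3$. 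A minimizing sequence of such planes has uniformly bounded energy, so SFT compactness—with no breaking, since the filling $\overline{X}$ contains no Reeb orbits and the action is minimal—produces a limiting fast plane asymptotic to a Hopf orbit $\gamma$ with $\action(\gamma)=\action_{\rm Hopf}(X)$ and $\CZ(\gamma)=3$. That the infimum over all Hopf orbits coincides with the infimum over $\CZ=3$ orbits I would obtain from the converse in \cite{hryn_jsg} (every Hopf orbit bounds a disk-like section) together with a Poincar\'e--Birkhoff argument on the area-preserving return map, ruling out action-minimizers of rotation number exceeding~$2$.

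The heart is the monotonicity of $\action_{\rm Hopf}$, proved by stretching the neck. Let $\gamma_2\subset\partial X$ realize $\action_{\rm Hopf}(X)$ with $\CZ(\gamma_2)=3$, and let $u$ be its fast plane in $\overline{X}$. Viewing a symplectic embedding $\Omega\hookrightarrow X$ as an exact cobordism $\W$ with positive end $\partial X$ and negative end $\partial\Omega$, I would stretch $u$ along $\partial\Omega$: SFT compactness degenerates $u$ into a holomorphic building whose top level lies in $\W$ and whose bottom level lies in the symplectization-plus-filling of $\partial\Omega$. Tracking genus, puncture count, and the action filtration, one extracts from the bottom level an embedded fast plane in $\overline{\Omega}$ asymptotic to a single orbit $\gamma_1\subset\partial\Omega$ with $\action(\gamma_1)\le\action(\gamma_2)$, whence $\action_{\rm Hopf}(\Omega)\le\action(\gamma_1)\le\action_{\rm Hopf}(X)$. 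Siefring's asymptotic intersection theory for punctured curves, combined with the preserved fastness, is what certifies that $\gamma_1$ is simple, unknotted, of self-linking $-1$—a genuine Hopf orbit—and that $\CZ(\gamma_1)=3$.

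For normalization, $\partial B^4(r)$ carries the Hopf flow, whose simple orbits are all Hopf orbits of action $r$, so the equality part gives $c_{\rm Hopf}(B^4(r))=\action_{\rm Hopf}(B^4(r))=r$. For the cylinder, $B^4(r)\hookrightarrow Z^4(r)$ yields $c_{\rm Hopf}(Z^4(r))\ge r$; conversely any compact dynamically convex $\Omega\hookrightarrow Z^4(r)$ lies in an ellipsoid $E(r+\ep,N)$ for $N$ large, and since its short axis is a $\CZ=3$ Hopf orbit we have $\action_{\rm Hopf}(E(r+\ep,N))=r+\ep$, so monotonicity forces $\action_{\rm Hopf}(\Omega)\le r+\ep$ for every $\ep>0$ and hence $c_{\rm Hopf}(Z^4(r))\le r$. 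I expect the main obstacle to be the holomorphic analysis in the third paragraph: achieving transversality for somewhere-injective curves in $\W$, controlling the SFT limit so that the bottom level genuinely contains a fast plane—rather than a building carrying extra negative ends or multiply covered components—and deploying the punctured intersection theory needed to pin down the knot type and self-linking of the limiting orbit.
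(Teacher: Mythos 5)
Your formal scaffolding --- monotonicity and conformality of $c_{\rm Hopf}$ straight from the definition, the reduction of the ``moreover'' clause to monotonicity of $\action_{\rm Hopf}$ under embeddings of dynamically convex domains, and the ellipsoid argument for normalization --- agrees with the paper (Steps 1--3 of its proof). The gaps are in the geometric core, and two of them are fatal as written. First, you cannot ``stretch $u$'': $u$ is holomorphic for one almost complex structure, and a neck-stretching argument needs, for each stretched structure $J_n$, the \emph{existence} of a $J_n$-holomorphic fast plane asymptotic to $\gamma_2$. Absent an invariant guaranteeing this (the paper deliberately avoids Seiberg--Witten theory), that existence is exactly the hard point. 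The paper's Lemma~\ref{lemma_hard_step} solves it by fixing the cobordism $W=\Omega_2\setminus\varphi(\operatorname{int}\Omega_1)$, whose completion has a cylindrical negative end and \emph{no filling}, taking the two-dimensional moduli space of embedded fast planes asymptotic to $\gamma_+$ (nonempty because the plane of \cite[Lemma~3.14]{hryn_jsg} in $\R\times Y^+$ can be translated up into $\W$; a manifold by the automatic transversality of Lemma~\ref{lemma_autom_transv}), and pushing these planes arbitrarily deep into the negative end via openness of the covered region plus compactness of the sets $\mathcal{Y}_L$. Second, even granting your limit building, its bottom level lives in the filling $\overline{\Omega}$, and an embedded fast plane in a four-dimensional filling does not certify the Hopf property: unknottedness means bounding an embedded disk inside the three-manifold $\partial\Omega$, whereas a disk in the filling only exhibits the orbit as slice there. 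The intersection-theoretic results you invoke from \cite{fast}, \cite{hryn_jsg}, \cite{sief_int} are symplectization statements --- they use the projection of the plane to $Y$ and its transversality to the Reeb field --- and the paper explicitly warns in \S\ref{ssec_curves_cobodisms} that in cobordisms a fast plane need not even be immersed or somewhere injective, since $\wind_\pi$ is unavailable. The paper's cobordism setup is engineered precisely so that the limit building is a cylinder in $\W$ atop a fast plane in the symplectization $\R\times Y^-$, where the Hopf certification applies.

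There is also an outright false claim: fastness does not force $\CZ=3$. In $E(a,b)$ with $1<b/a<2$, the orbit $(z_1=0)$ has rotation number $1+b/a\in(2,3)$, hence $\CZ=5$; it is a Hopf orbit, and by \cite{hryn_jsg} it bounds an embedded fast plane (the asymptotic operator has negative eigenvalues of winding~$1$ below those of winding~$2$). In the paper, $\CZ(\gamma_-)=3$ is instead extracted from nonnegativity of the weighted Fredholm index of the cobordism level for \emph{generic} $\jbar$, which forces that level to be a cylinder with $\CZ(\gamma_-)\le 3$, hence $=3$ by dynamical convexity. Relatedly, ruling out upper symplectization levels in your compactness step cannot be done by ``action minimality'': breaking could occur at non-Hopf orbits of small action, and the paper excludes it by the winding estimate ($\wind_\infty\le 1$ at the top puncture, $\ge 2$ at any negative puncture by dynamical convexity, and $\wind_\pi\ge 0$). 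Finally, your nondegenerate-perturbation step is incomplete without the paper's Lemma~\ref{lemma_easy_step}: the $C^\infty$-limit of Hopf orbits of the perturbed forms could a priori be multiply covered, and it is again dynamical convexity (a non-simple limit would have $\CZ\ge5$) combined with lower semicontinuity of $\CZ$ and $\CZ(\gamma_n)=3$ that forces the limit to be a simple orbit with $\CZ=3$, hence Hopf.
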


\begin{remark}
\label{rem:HWZquestionGSS}
Theorem~\ref{main_fast_capacity} and Remark~\ref{rem:gss} relate to the following important question posed by Hofer, Wysocki and Zehnder in~\cite{convex}: \textit{For a dynamically convex star-shaped domain $X$ in $\R^4$, is $\mathcal{A}_{\rm min}(X)$ equal to the action of a Reeb orbit that bounds a disk-like global surface of section for the Reeb flow on~$\partial X$?} In view of the result from~\cite{hryn_jsg}, this is equivalent to asking: \textit{Is $\action_{\rm min}(X) = \action_{\rm Hopf}(X)$ for every dynamically convex star-shaped domain $X$ in $\R^4$?} This difficult question remains open. Abbondandolo, Edtmair and Kang showed in~\cite{aek} that it has an affirmative answer in the special case where $X$ is convex.
\end{remark}

\begin{remark}
\label{rem:fastplanes}
The key technical step in the proof of Theorem~\ref{main_fast_capacity} is Lemma~\ref{lemma_hard_step} below. Its proof is based on the notion of a ``fast plane'' introduced in~\cite{hry_thesis}. An end of a finite-energy curve can be seen as a gradient trajectory of the action functional on the loop space; it converges to a periodic orbit, i.e. a critical point of the action. There is an associated asymptotic operator, which plays the role of the ``Hessian'' of the action at the critical point; in a nondegenerate situation it governs the approach to the periodic orbit, in the very same way that in finite-dimensions the Hessian governs how a gradient trajectory approaches a critical point. This is the main result of the work~\cite{props1} by Hofer, Wysocki and Zehnder. Still assuming nondegeneracy, a curve approaches the periodic orbit at a given puncture with an exponential decay dictated by a negative eigenvalue. The more negative the eigenvalue, the faster the approach. In $3$-dimensions, eigenvalues can be compared to each other in terms of winding numbers of associated eigenvectors: smaller winding number implies smaller eigenvalue. Moreover, for topological reasons, the winding number of the ``asymptotic eigenvector'' of a finite-energy plane is~$\geq 1$ in a trivialization of the contact structure that extends over the plane. A fast plane is one for which the approach is as fast as possible, in the sense that the corresponding eigenvalue has eigenvectors of winding number~$1$. It turns out that fast planes have good intersection-theoretic properties to form foliations transverse to the Reeb flow and, under the assumption of dynamical convexity, to induce global surfaces of section.
\end{remark}


\subsection{The first ECH capacity}

Our second result is the following dynamical interpretation of the first ECH capacity.

\begin{theorem}
\label{thm:Hopf_ECH}
If $X\subset\R^4$ is a dynamically convex domain, then
\[
c_1^{\rm ECH}(X) = \mathcal{A}_{\operatorname{Hopf}}(X).
\]
\end{theorem}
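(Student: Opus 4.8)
The plan is to identify both $c_1^{\ECH}(X)$ and $\action_{\rm Hopf}(X)$ with the action of a distinguished ECH generator of index~$2$, using the dictionary of Remark~\ref{rem:gss} between Hopf orbits, disk-like global surfaces of section, and embedded fast planes. Since $c_1^{\ECH}$ and, by Theorem~\ref{main_fast_capacity}, $\action_{\rm Hopf}=c_{\rm Hopf}$ are both monotone symplectic capacities, hence continuous under $C^\infty$-approximation of the domain, I would first reduce to the case that $\lambda$ is nondegenerate (as well as dynamically convex) by approximating $X$ by nondegenerate dynamically convex domains and passing to the limit. For such $\lambda$ one has $\ECH_*(\partial X,\lambda)\cong\Z$ in each even nonnegative grading, with canonical generators $\zeta_0=[\emptyset],\zeta_1,\zeta_2,\dots$ satisfying $U\zeta_k=\zeta_{k-1}$, and $c_1^{\ECH}(X)$ is the spectral invariant of $\zeta_1$, i.e.\ the least filtration level $L$ at which $\zeta_1$ lies in the image of $\ECH^L\to\ECH$. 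Because the action spectrum is discrete, this level is the action of some generator $\alpha$ with $I(\alpha)=2$ appearing in a minimal cycle representing $\zeta_1$.

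The central device is the following correspondence. Let $\gamma\in\P_{\rm Hopf}(\partial X)$ and let $Z$ be the disk it bounds. The ECH index formula $I(\gamma)=c_\tau(Z)+Q_\tau(Z)+\CZ_\tau(\gamma)$, together with $\sl(\gamma)=-1$ (which yields $c_\tau(Z)+Q_\tau(Z)=-1$) and $\CZ(\gamma)=3$, gives $I(\gamma)=2$. By Remark~\ref{rem:gss} and the fast-plane theory of Remark~\ref{rem:fastplanes}, $\gamma$ bounds a disk-like global surface of section whose lift is an embedded index-$2$ holomorphic plane in $\R\times\partial X$, asymptotic to $\gamma$ at its positive end with no negative ends, and these planes foliate the complement of $\R\times\gamma$, so exactly one passes through each generic point. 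This is precisely the configuration counted by the $U$-map; using positivity of intersections with this foliation to rule out lower-action curves, one shows that $\langle\gamma\rangle$ is an ECH cycle and that $U[\gamma]=\zeta_0$. Since $U$ is an isomorphism $\ECH_2\to\ECH_0$, we conclude $[\gamma]=\zeta_1$, so $\zeta_1$ is represented at filtration level $\action(\gamma)$. Taking the infimum over $\gamma\in\P_{\rm Hopf}(\partial X)$ gives $c_1^{\ECH}(X)\le\action_{\rm Hopf}(X)$.

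For the reverse inequality I would start from the generator $\alpha$ with $I(\alpha)=2$ realizing the spectral invariant of $\zeta_1$. Since $U\zeta_1=\zeta_0\ne 0$, the $U$-map produces a rigid index-$2$ holomorphic curve through a generic point of $\R\times\partial X$ running from $\alpha$ to the empty orbit set. The ECH index inequality and partition conditions, combined with dynamical convexity—which forces every multiple cover to have strictly larger index and thereby excludes multiply covered asymptotics in index~$2$—would show that the component through the point is an embedded genus-$0$ plane asymptotic to a single simple orbit; hence $\alpha$ is that orbit. Dynamical convexity makes this plane fast in the sense of Remark~\ref{rem:fastplanes}, so by \cite{hry_thesis,hryn_jsg} its image projects to a disk-like global surface of section and $\alpha\in\P_{\rm Hopf}(\partial X)$. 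Therefore $c_1^{\ECH}(X)=\action(\alpha)\ge\action_{\rm Hopf}(X)$, and combined with the previous paragraph this yields the claimed equality.

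The main obstacle is converting the purely homological information $U\zeta_1=\zeta_0$ into geometry at the level of the minimal-action generator: showing that the index-$2$ curve realizing the $U$-map is an embedded fast plane asymptotic to a single simple orbit, so that it produces a global surface of section and exhibits $\alpha$ as a Hopf orbit. This is where the ECH index inequality, the asymptotic winding estimates underlying fast planes, and dynamical convexity must be combined, while excluding broken and multiply covered configurations in the $U$-count. A second, more technical point is the chain-level claim in the second paragraph—that a Hopf orbit is an ECH cycle representing $\zeta_1$—which I would establish via intersection positivity with the foliation by fast planes; one must also verify that nondegenerate dynamically convex approximations exist and that both invariants pass to the limit.
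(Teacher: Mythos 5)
Your proposal follows the same skeleton as the paper's proof: reduce to nondegenerate $\lambda$ by continuity; for one inequality, show that a Hopf orbit $\gamma$ with $\CZ(\gamma)=3$ has $I(\gamma)=2$, is a cycle, and satisfies $U[\gamma]=[\emptyset]$, so it represents the generator of $\ECH_2$ and $c_1^{\ECH}(X)\le\action(\gamma)$; for the other, extract from the $U$-map a holomorphic curve from a minimal-action index-$2$ generator to $\emptyset$ and show that generator is a Hopf orbit. (One small correction on the first direction: the index computation $I(\gamma)=2$ needs $\CZ(\gamma)=3$, which a general Hopf orbit need not satisfy, so you cannot simply ``take the infimum over $\gamma\in\P_{\rm Hopf}(\partial X)$''; the paper instead applies the argument to the single minimizing orbit with $\CZ=3$ furnished by Theorem~\ref{main_fast_capacity}.)

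The genuine gap is the step you state as ``the $U$-map produces a rigid index-$2$ holomorphic curve \dots running from $\alpha$ to the empty orbit set.'' Homological information alone does not yield this: $U_{J,y}$ applied to a minimal cycle $\sum_j x_j$ representing $\zeta_1$ is a degree-$0$ cycle \emph{homologous} to $\emptyset$, but a priori it could be a different representative of the class $[\emptyset]$, with the discrepancy carried by a boundary, in which case no curve ending at $\emptyset$ need exist. The paper closes this with its Step~2: for any ECH generator $\alpha$ with $I(\alpha)=1$, dynamical convexity forces $J_0(\alpha)\le-2$, while existence of a curve in $\mathcal{M}^J(\alpha,\emptyset)$ would force $J_0(\alpha)-J_0(\emptyset)\ge-1$; hence no differential ever hits $\emptyset$, so any cycle homologous to $\emptyset$ must literally contain $\emptyset$ as a summand, and therefore some $U_{J,y}x_j$ does, producing the required $\mathcal{C}\in\mathcal{M}^J(x_j,\emptyset)$. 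You flag this region of the argument as the ``main obstacle,'' but the tools you propose (ECH index inequality, partition conditions) address the subsequent analysis of the curve once it exists, not its existence. For that subsequent analysis the paper again uses the $J_0$ inequality to force $x_j$ to be a single simple orbit with $\CZ=3$ (whence $\sl=-1$ from $I=2$) and $\mathcal{C}$ to be a plane, and then cites \cite[Prop.~3.4]{wh} for embeddedness of the projection to conclude unknottedness; your alternative of invoking fast-plane theory from \cite{fast,hryn_jsg} at this point is plausible and close in spirit to what the paper does elsewhere, but it cannot be run until the existence gap above is filled.
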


\begin{remark}
It is shown in~\cite{hut_qech} using Seiberg-Witten theory that $c_1^{\rm ECH}$ is a normalized symplectic capacity. It follows from this and Theorem 1.12 that $c_{\rm Hopf}$ is a normalized symplectic capacity on dynamically convex star-shaped domains in~$\R^4$. However our proofs of Theorems 1.9 and Theorem 1.12 establish that $c_1^{\rm ECH}$ and $c_{\rm Hopf}$ are normalized symplectic capacities on dynamically convex star-shaped domains in $\R^4$ without using Seiberg-Witten theory.
\end{remark}

\begin{remark}
The first ECH capacity of a dynamically convex domain $X\subset\R^4$ is the first number in the ECH spectrum of $\partial X$ with the induced contact form. Shibata \cite{taisuke1,taisuke2} has independently established dynamical interpretations of the first number in the ECH spectrum, similar to Theorem~\ref{thm:Hopf_ECH} and with some similar arguments, for certain lens spaces with contact forms that are either strictly convex or dynamically convex and nondegenerate. Note that Theorem~\ref{thm:Hopf_ECH} does not make any nondegeneracy assumption.
\end{remark}

\subsection{Relation to the cylinder capacity}

Let $X\subset\R^4$ be a dynamically convex domain. It follows from Theorem \ref{main_fast_capacity} or Theorem~\ref{thm:Hopf_ECH} and \eqref{eq:cap} that
\begin{equation}
\label{eqn:Hopf_cyl_ineq}
\action_{\rm Hopf}(X)\le c_Z(X).
\end{equation}
On the other hand, Edtmair~\cite{gss_edtmair} showed that if there exists a Hopf orbit $\gamma$ with $\mathcal{A}(\gamma)=R$, then there exists a symplectic embedding $X\hookrightarrow Z(R)$. In particular, $c_Z(X) \le \mathcal{A}_{\operatorname{Hopf}}(X)$. Combining this with Theorem~\ref{main_fast_capacity}, or alternatively with Theorem~\ref{thm:Hopf_ECH}, we get:

\begin{corollary}
\label{cor:chopf_cz}
If $X\subset\R^4$ is a dynamically convex domain, then
\begin{equation}
\label{eqn:Hopf_cyl}
c_Z(X) = \action_{\rm Hopf}(X),
\end{equation}
and the infimum in \eqref{eqn:cZ} is realized.
\end{corollary}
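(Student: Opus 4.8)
The plan is to establish the two inequalities $\action_{\rm Hopf}(X) \le c_Z(X)$ and $c_Z(X) \le \action_{\rm Hopf}(X)$ separately, and then to verify that the infimum defining $c_Z(X)$ is attained. Since $X$ is dynamically convex, both main theorems apply and each, combined with the cited embedding result of Edtmair, drives one of the two inequalities.

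First I would prove the inequality \eqref{eqn:Hopf_cyl_ineq}. By Theorem~\ref{main_fast_capacity}, the invariant $c_{\rm Hopf}$ is a normalized symplectic capacity, and $c_{\rm Hopf}(X) = \action_{\rm Hopf}(X)$ because $X$ is dynamically convex. Applying the general bound \eqref{eq:cap} to the normalized capacity $c_{\rm Hopf}$ then gives $\action_{\rm Hopf}(X) = c_{\rm Hopf}(X) \le c_Z(X)$. (Alternatively, one may run the identical argument with $c_1^{\rm ECH}$ in place of $c_{\rm Hopf}$: Theorem~\ref{thm:Hopf_ECH} gives $c_1^{\rm ECH}(X) = \action_{\rm Hopf}(X)$, and \eqref{eq:cap} applied to the normalized capacity $c_1^{\rm ECH}$ yields the same conclusion.)

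Next I would prove the reverse inequality using Edtmair's embedding result~\cite{gss_edtmair}. For each Hopf orbit $\gamma \in \P_{\rm Hopf}(\partial X)$, that result produces a symplectic embedding $X \hookrightarrow Z(\action(\gamma))$, whence $c_Z(X) \le \action(\gamma)$ directly from the definition \eqref{eqn:cZ}. Taking the infimum over all $\gamma \in \P_{\rm Hopf}(\partial X)$ gives $c_Z(X) \le \action_{\rm Hopf}(X)$. Combining with the previous paragraph yields $c_Z(X) = \action_{\rm Hopf}(X)$, which is \eqref{eqn:Hopf_cyl}.

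Finally, for the claim that the infimum in \eqref{eqn:cZ} is realized, I would invoke the last assertion of Theorem~\ref{main_fast_capacity}: there exists a Hopf orbit $\gamma_0 \in \P_{\rm Hopf}(\partial X)$ with $\action(\gamma_0) = \action_{\rm Hopf}(X) = c_Z(X)$. Applying Edtmair's result to $\gamma_0$ produces a symplectic embedding $X \hookrightarrow Z(c_Z(X))$, so that the value $R = c_Z(X)$ is attained in the infimum \eqref{eqn:cZ}. Since essentially all of the analytic content is packaged in the two main theorems and in~\cite{gss_edtmair}, the corollary is a formal consequence; the only point requiring genuine care is that $\action_{\rm Hopf}(X)$ is an attained minimum rather than merely an infimum, which is exactly what the final clause of Theorem~\ref{main_fast_capacity} guarantees.
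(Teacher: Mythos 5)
Your proposal is correct and follows essentially the same route as the paper: the inequality $\action_{\rm Hopf}(X)\le c_Z(X)$ comes from Theorem~\ref{main_fast_capacity} (or Theorem~\ref{thm:Hopf_ECH}) combined with \eqref{eq:cap}, the reverse inequality comes from Edtmair's embedding result applied to Hopf orbits, and the realization of the infimum in \eqref{eqn:cZ} follows by applying Edtmair's result to the action-minimizing Hopf orbit whose existence is the final clause of Theorem~\ref{main_fast_capacity}. Your only addition is to spell out these formal steps more explicitly than the paper does, which leaves nothing to correct.
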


\begin{remark}
Corollary~\ref{cor:chopf_cz} is also stated in~\cite{gss_edtmair}. However, it should be stressed that the work done in~\cite{gss_edtmair} concerning~\eqref{eqn:Hopf_cyl} consists solely in the proof of the inequality $c_Z(X) \le \mathcal{A}_{\operatorname{Hopf}}(X)$ for a dynamically convex star-shaped domain~$X$. The reverse inequality~$\mathcal{A}_{\operatorname{Hopf}}(X) \leq c_Z(X)$ is a direct consequence of Theorem~\ref{main_fast_capacity}.
This is used in~\cite{gss_edtmair}. 
\end{remark}



\subsection{Viterbo's conjecture and the convex case}


Viterbo's conjecture from~\cite{viterbo} asserts that if $c$ is a normalized symplectic capacity and $X\subset\R^{2n}$ is convex then $c(X)^n \le n! \, {\rm vol}(X)$. This would follow from a positive answer  to a question due to Hofer from the 1990s: \textit{Do the values of all normalized symplectic capacities agree on a convex domain?} A conjectural positive answer to this question has been later called the ``strong Viterbo conjecture''. Its validity implies the ``weak Viterbo conjecture'' asserting that $\action_{\rm min}(X)^n \le n! \, {\rm vol}(X)$ when $X$ is a convex domain with smooth boundary. See~\cite{GHR} for discussion. Recently Haim-Kislev and Ostrover ~\cite{ako} found counterexamples to the weak Viterbo conjecture, and hence to all versions of the conjecture, for all $n \ge 2$. On the other hand, the result of~\cite{aek} shows that if $X\subset \R^4$ is convex, then all normalized capacities of $X$ that are spectral agree with $\action_{\rm min}(X)$. It is an open question whether this is still true if $X \subset \R^4$ is dynamically convex. For example, if $X\subset\R^4$ is dynamically convex, then we know that $c_1^{\rm Alt}(X) \le c_1^{\rm CH}(X) \le c_1^{\rm ECH}(X) = c_Z(X)$, but we do not know whether equality always holds. This would follow from the stronger conjecture in Remark~\ref{rem:HWZquestionGSS}.

\subsection*{Acknowledgments.} U.H.\ acknowledges the support by DFG SFB/TRR 191 `Symplectic Structures in Geometry, Algebra and Dynamics', Projektnummer 281071066-TRR 191. M.H.\ was partially supported by NSF grant DMS-2005437. V.G.B.R.\ is grateful for the hospitality of the Institute for Advanced Study, where part of this work was completed. V.G.B.R.\ was partially supported by NSF grant DMS-1926686, FAPERJ grant JCNE E-26/201.399/2021 and a Serrapilheira Institute grant.


\section{The Hopf capacity}
\label{sec:preliminaries}

The goal of this section is to prove Theorem \ref{main_fast_capacity}. Before stating the main lemma, we recall a few definitions from contact dynamics.

Let $X\subset \R^4$ be a star-shaped domain, and let $\lambda=\lambda_0|_{\partial X}$ be the induced contact form on $\partial X$. Then $d\lambda$ restricts to a symplectic structure on the contact structure $\xi=\operatorname{Ker}(\lambda)$. We denote by $\phi^t:\partial X \to \partial X$ the flow of the Reeb vector field $R_\lambda$. The derivative of the flow restricts to a symplectic linear map
\[
d\phi^t:(\xi_p,d\lambda)\longrightarrow (\xi_{\phi^t(p)},d\lambda).
\]

Now let $\gamma:\R/T\Z\to\partial X$ be a Reeb orbit.
The Reeb orbit $\gamma$ is {\bf nondegenerate\/} if $1$ is not in the spectrum of 
\begin{equation}
\label{eqn:lrm}
d\phi^T : {\xi}_{\gamma(0)} \longrightarrow {\xi}_{\gamma(0)}.
\end{equation}
We say that the domain $X$ (or the contact form on $\partial X$) is {\bf nondegenerate\/} (resp.\ nondegenerate up to action $C$) if all Reeb orbits $\gamma\in\P(\partial X)$ (resp.\ all Reeb orbits with action less than $C$) are nondegenerate. Finally, we say that the domain $X$ (or the contact form on $\partial X$) is {\bf dynamically convex up to action $C$\/} if $\CZ(\gamma)\ge 3$ for all $\gamma\in \P(\partial X)$ with $\action(\gamma)\leq C$.


\subsection{The main lemma and the proof of Theorem~\ref{main_fast_capacity}}

The crucial step in the proof of Theorem~\ref{main_fast_capacity} is the following result.

\begin{lemma}
\label{lemma_hard_step}
Let $\Omega_1$ and $\Omega_2$ be star-shaped domains in $\R^4$.
Let $C>\action_{\rm Hopf}(\Omega_2)$ and assume that:
\begin{itemize}
\item $\Omega_1$ is nondegenerate up to action~$C$.
\item $\Omega_1$, $\Omega_2$ are dynamically convex up to action $C$.
\end{itemize}
Suppose that there exists a symplectic embedding $\varphi: \Omega_1 \to \Omega_2$. Then there exists a Hopf orbit $\gamma\in\P_{\rm Hopf}(\Omega_1)$ such that $\CZ(\gamma)=3$ and $\action(\gamma) \leq \action_{\rm Hopf}(\Omega_2)$. In particular, $\mathcal{A}_{\rm Hopf}(\Omega_1) \leq \mathcal{A}_{\rm Hopf}(\Omega_2)$. If, in addition, $\Omega_2$ is nondegenerate up to action~$C$ and $\varphi(\Omega_1) \subset {\rm int}(\Omega_2)$ then $\action(\gamma) < \action_{\rm Hopf}(\Omega_2)$.
\end{lemma}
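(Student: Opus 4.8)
The plan is to set up a symplectic cobordism from $\partial\Omega_1$ to $\partial\Omega_2$ and to propagate a foliation by \emph{fast} finite-energy planes from the positive end down to the negative end, where it must degenerate into a fast plane whose asymptotic orbit is the desired Hopf orbit in $\partial\Omega_1$. Concretely, after identifying $\Omega_1$ with $\varphi(\Omega_1)\subseteq\Omega_2$, I would form the completed exact symplectic cobordism $\widehat{W}$ obtained from $\Omega_2\setminus\operatorname{int}(\varphi(\Omega_1))$ by attaching cylindrical ends $[0,\infty)\times\partial\Omega_2$ and $(-\infty,0]\times\partial\Omega_1$, and fix a generic cobordism-compatible almost complex structure that is $\R$-invariant and adapted to $\lambda$ on each end. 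I would then choose a Hopf orbit $\gamma_2\in\P_{\rm Hopf}(\partial\Omega_2)$ with $\action(\gamma_2)\leq\action_{\rm Hopf}(\Omega_2)+\ep<C$; when $\Omega_2$ is nondegenerate up to action $C$ the finitely many Hopf orbits below $C$ are isolated, so in that case $\gamma_2$ may be taken to be a genuine minimizer with $\action(\gamma_2)=\action_{\rm Hopf}(\Omega_2)$.

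The second step is to produce the relevant moduli space. Since $\Omega_2$ is dynamically convex up to action $C$ and $\action(\gamma_2)<C$, the fast-plane theory described in Remark~\ref{rem:fastplanes} and \cite{hry_thesis} (building on \cite{props1,convex}) provides a fast finite-energy plane asymptotic to $\gamma_2$ in $\R\times\partial\Omega_2$, together with the associated disk-like global surface of section. I would transport this into $\widehat{W}$ and study the moduli space $\M$ of fast planes in $\widehat{W}$ positively asymptotic to $\gamma_2$. Siefring's intersection theory for punctured curves, combined with dynamical convexity, should show that these planes are embedded, pairwise disjoint, and everywhere transverse to the Reeb flow, so that near the positive end they reproduce the global surface of section foliation and, more generally, foliate $\widehat{W}$ away from the binding; in particular through a generic point $p$ there passes a unique such plane $u_p$.

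Next I would track $u_p$ as the point $p$ descends toward the negative end and invoke SFT compactness to extract a limiting holomorphic building. The heart of the argument — and the step I expect to be the main obstacle — is to show that the bottom level of this building, which lives in $\R\times\partial\Omega_1$, consists of a single, non-multiply-covered fast plane asymptotic to a simple Reeb orbit $\gamma_1\in\P(\partial\Omega_1)$. This requires using dynamical convexity of $\Omega_1$ up to action $C$ together with index and intersection bounds — all orbits appearing in the limit have $\CZ\geq3$ and action $<C$ — to rule out multiply-covered components, nodal degenerations, and connector cylinders at bad orbits, and to guarantee that the winding-number-one (``fast'') condition survives the limit. Once this is established, the asymptotic and intersection analysis of a fast plane forces $\gamma_1$ to be unknotted with self-linking number $-1$, hence a Hopf orbit, while the fast condition together with $\rho(\gamma_1)>1$ pins down $\CZ(\gamma_1)=3$.

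Finally I would extract the action estimate. Exactness of the cobordism and of the symplectization levels makes the total action nonincreasing from top to bottom of the building, giving $\action(\gamma_1)\leq\action(\gamma_2)\leq\action_{\rm Hopf}(\Omega_2)+\ep$. Since $\Omega_1$ is nondegenerate up to $C$, only finitely many Hopf orbits of $\partial\Omega_1$ lie below $C$, so letting $\ep\to0$ produces a genuine $\gamma\in\P_{\rm Hopf}(\partial\Omega_1)$ with $\CZ(\gamma)=3$ and $\action(\gamma)\leq\action_{\rm Hopf}(\Omega_2)$, whence $\action_{\rm Hopf}(\Omega_1)\leq\action_{\rm Hopf}(\Omega_2)$. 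For the strict inequality, if $\varphi(\Omega_1)\subset\operatorname{int}(\Omega_2)$ and $\Omega_2$ is nondegenerate up to $C$, then $\gamma_2$ is a genuine minimizer and the region $\Omega_2\setminus\varphi(\Omega_1)$ has positive symplectic volume, forcing a non-trivial component of the building to cross the cobordism and carry strictly positive $d\lambda$-energy; this makes the action drop strict, giving $\action(\gamma)<\action_{\rm Hopf}(\Omega_2)$.
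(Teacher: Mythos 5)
Your overall strategy is the same as the paper's (exact cobordism from $\partial\Omega_2$ to $\varphi(\partial\Omega_1)$, fast planes asymptotic to a near-minimal Hopf orbit pushed toward the negative end, SFT compactness, winding/index arguments to control the limit building, Stokes for the action drop), but two steps as written would fail. First, you run the entire holomorphic apparatus with positive end $\partial\Omega_2$ without ever reducing to the case where $\Omega_2$ is nondegenerate up to action $C$ and $\varphi(\Omega_1)\subset\operatorname{int}(\Omega_2)$. The lemma does not assume either: if $\varphi(\Omega_1)$ touches $\partial\Omega_2$ (e.g.\ $\varphi=\mathrm{id}$, $\Omega_1=\Omega_2$) there is no cobordism at all, and if $\Omega_2$ is degenerate then the Hofer--Wysocki--Zehnder asymptotics at the positive puncture, the Fredholm theory for planes asymptotic to $\gamma_2$, and above all SFT compactness (which needs \emph{all} orbits on the positive end below the action bound to be nondegenerate, not just $\gamma_2$) are unavailable. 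Your $\ep\to 0$ limit at the end cannot repair this, because the per-$\ep$ statement already requires this machinery. The paper devotes a separate preliminary step to exactly this reduction: it slightly enlarges and perturbs $\Omega_2$ to nondegenerate, dynamically-convex-up-to-$C$ domains containing $\Omega_2$ in their interior and carrying Hopf orbits of nearby action, proves the lemma there, and then recovers the general case by passing to the limit with Lemma~\ref{lemma_easy_step}, which requires no nondegeneracy. Some such reduction is an essential missing piece of your argument.

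Second, your claim that ``the fast condition together with $\rho(\gamma_1)>1$ pins down $\CZ(\gamma_1)=3$'' is false. Fastness only says the asymptotic eigenvalue has $\wind_\tau=1$, which forces $\alpha^{<0}_\tau(\gamma_1)\geq 1$ and hence $\CZ(\gamma_1)\geq 3$; it gives no upper bound. Concretely, in an ellipsoid $E(a,b)$ with $a<b<2a$ and $b/a$ irrational, the long orbit has $\CZ=5$, yet it is a Hopf orbit and is the asymptotic limit of an embedded fast plane (this is exactly the input \cite{hryn_jsg} provides for \emph{any} Hopf orbit in a dynamically convex boundary). The paper instead obtains $\CZ=3$ from the cobordism level of the limit building: after showing there are no upper levels and that the middle level is somewhere injective (its positive asymptotic limit $\gamma_+$ is simple) and embedded (Micallef--White), genericity of $\jbar$ forces its weighted Fredholm index $\CZ^\delta_\tau(\gamma_+)-\sum_z \CZ_\tau(\gamma_z)-1+\#\Gamma_0$ to be nonnegative, and dynamical convexity then forces $\#\Gamma_0=1$ with equality throughout, i.e.\ the middle level is a cylinder and its negative asymptotic orbit has $\CZ=3$. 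This genericity-plus-index step is absent from your outline, and it cannot be dropped: $\CZ=3$ is part of the conclusion of Lemma~\ref{lemma_hard_step} and is precisely what Lemma~\ref{lemma_easy_step} needs downstream. (A smaller point: your claim that the fast planes foliate $\widehat W$ ``transverse to the Reeb flow'' is not meaningful in the interior of the cobordism, where there is no Reeb flow; the paper only uses openness of the swept-out region, via automatic transversality, together with compactness of the truncated moduli space, which is all that is needed to push planes down the negative end.)
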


\begin{proof}[Proof of Theorem~\ref{main_fast_capacity}, assuming Lemma~\ref{lemma_hard_step}.]

We proceed in four steps.

\subsubsection*{Step 1}
We first show that $c_{\operatorname{Hopf}}$ is a symplectic capacity.

It follows immediately from equation \eqref{c_Hopf} that $c_{\operatorname{Hopf}}$ satisfies the Monotonicity property \eqref{eqn:monotonicity}.

We next show that $c_{\operatorname{Hopf}}$ satisfies the Conformality property \eqref{eqn:conformality}. By equation \eqref{c_Hopf}, it is enough to show that if $\Omega\subset\R^4$ is a dynamically convex domain and $r>0$, then
\begin{equation}
\label{eqn:conformality2}
\mathcal{A}_{\operatorname{Hopf}}(r\Omega) = r^2\mathcal{A}_{\operatorname{Hopf}}(\Omega).
\end{equation}
Equation \eqref{eqn:conformality2} holds because the scaling map $\Omega\to r\Omega$ preserves the direction of the Reeb vector field but multiplies the period of all Reeb orbits by $r^2$.

%

\subsubsection*{Step 2}
We now show that if $X\subset\R^4$ is any dynamically convex domain then
\begin{equation}
\label{eq:chopf_ahopf}
    c_{\rm Hopf}(X)=\action_{\rm Hopf}(X).
\end{equation}
To prove \eqref{eq:chopf_ahopf}, first we need an auxiliary lemma.

\begin{lemma}
\label{lemma_easy_step}
Let $\Omega \subset \mathbb{R}^4$ be a star-shaped domain that is dynamically convex up to~$C>\action_{\rm Hopf}(\Omega)$. Let $\Omega_n \subset \mathbb{R}^4$ be star-shaped domains satisfying $\partial \Omega_n \to \partial \Omega$ in $C^\infty$ as hypersurfaces. Let $\gamma_n \in \mathcal{P}_{\rm Hopf}(\partial\Omega_n)$ satisfy $\sup_n \mathcal{A}(\gamma_n) \leq C$ and ${\rm CZ}(\gamma_n)=3 \ \forall n$. There exists $\gamma \in \mathcal{P}_{\rm Hopf}(\partial \Omega)$ satisfying $\mathcal{A}(\gamma) = \liminf_{n \to \infty} \mathcal{A}(\gamma_n)$ and ${\rm CZ}(\gamma)=3$.
\end{lemma}

It is important to note that the above statement makes no nondegeneracy assumptions.

\begin{proof}[Proof of Lemma~\ref{lemma_easy_step}]
The statement can be rewritten in terms of contact forms on~$S^3$ defining its standard contact structure $\xi_0 = \ker \lambda_0|_{S^3}$. Let $\lambda$, $\lambda_1,\lambda_2,\dots$ be contact forms on $(S^3,\xi_0)$. Assume that $\lambda$ is dynamically convex up to action~$C$ and that $\lambda_n \to \lambda$ in $C^\infty$. Let $\gamma_n$ be periodic $\lambda_n$-Reeb orbits which are Hopf orbits, i.e. each $\gamma_n$ is simple, unknotted and has self-linking number~$-1$. Assume that ${\rm CZ}(\gamma_n)=3 \ \forall n$ and that $\sup_n \int_{\gamma_n}\lambda_n \leq C$. We need to show that there exists a simple periodic $\lambda$-Reeb orbit~$\gamma$ which is unknotted, has self-linking number~$-1$, and $\int_\gamma\lambda = \liminf_{n\to\infty} \int_{\gamma_n}\lambda_n$. In fact, since $\int_{\gamma_n} \lambda_n$ is the period of $\gamma_n$ as a $\lambda_n$-Reeb orbit, we can apply the Arzel\`a-Ascoli Theorem  and assume, without loss of generality and up to selection of a subsequence, that for some periodic $\lambda$-Reeb orbit~$\gamma$ we have $\gamma_n \to \gamma$ in~$C^\infty$. The~$\gamma_n$ are simple by assumption but, in principle, it might not be the case that~$\gamma$ is simple. If $\gamma$ is not simple then, by the dynamical convexity of $\lambda$ up to action~$C$ and the fact that $\int_\gamma\lambda \leq C$, we have ${\rm CZ}(\gamma) \geq 5$. However, the lower-semicontinuity of the Conley-Zehnder index and the dynamical convexity of~$\Omega$ up to action~$C$ together imply that $3 \leq {\rm CZ}(\gamma) \leq \liminf_{n\to\infty} {\rm CZ}(\gamma_n) = 3$. This shows that $\gamma$ is a simple periodic orbit with ${\rm CZ}(\gamma)=3$, and that $\gamma$ is transversely isotopic to~$\gamma_n$ for~$n$ large enough. Hence, $\gamma$ is unknotted with self-linking number~$-1$.
\end{proof}

Suppose that $\Omega$ is dynamically convex and that there exists a symplectic embedding $\Omega\hookrightarrow X$. We will now show that
\begin{equation}
\label{eqn:wnts}
\mathcal{A}_{\operatorname{Hopf}}(\Omega) \le \mathcal{A}_{\operatorname{Hopf}}(X).
\end{equation}
Let $\lambda$ denote the contact form on $\partial\Omega$. Consider $\Omega_n$ nondegenerate star-shaped domains satisfying $\Omega_n \subset {\rm int}(\Omega)$, and $\Omega_n \to \Omega$ in $C^\infty$. We then have symplectic embeddings~$\Omega_n \hookrightarrow \mathrm{int}(X)$. For every $C>0$ there exists $n_C \in \mathbb{N}$ such that if $n\geq n_C$ then $\Omega_n$ is dynamically convex up to action~$C$. This is a simple consequence of the Arzel\`a-Ascoli Theorem and the fact that the Conley-Zehnder index is lower-semicontinuous. By Lemma~\ref{lemma_hard_step}, we find $\gamma_n \in \mathcal{P}_{\rm Hopf}(\partial \Omega_n)$ with ${\rm CZ}(\gamma_n) = 3$ and $\mathcal{A}(\gamma_n) \leq \mathcal{A}_{\rm Hopf}(X)$. By an application of Lemma~\ref{lemma_easy_step}, there exists $\gamma \in \mathcal{P}_{\rm Hopf}(\partial \Omega)$ such that $\mathcal{A}(\gamma) \leq \mathcal{A}_{\rm Hopf}(X)$. In particular,~\eqref{eqn:wnts} holds.

The inequality $c_{\rm Hopf}(X) \leq \mathcal{A}_{\rm Hopf}(X)$ is a direct consequence of the fact that~\eqref{eqn:wnts} holds for every dynamically convex star-shaped domain~$\Omega$ that symplectically embeds into $X$. The inequality $\mathcal{A}_{\rm Hopf}(X) \leq c_{\rm Hopf}(X)$ follows from the definition of~$c_{\rm Hopf}$. The proof of Step~2 is complete.

\subsubsection*{Step 3}
We now show that the symplectic capacity $c_{\operatorname{Hopf}}$ is normalized. Given real numbers $a,b>0$, define the ellipsoid
\[
E(a,b) = \left\{z\in\C^2 \;\bigg|\; \frac{\pi|z_1|^2}{a} + \frac{\pi|z_2|^2}{b} \le 1 \right\}.
\] 
There are two simple Reeb orbits in $\partial E(a,b)$, given by the circles $(z_2=0)$ and $(z_1=0)$. These Reeb orbits have actions $a$ and $b$ respectively, and they are both Hopf orbits. If $a/b$ is rational then there are additional simple Reeb orbits, but these do not have minimal action. Using Step 2, we conclude that
\begin{equation}
\label{eqn:ellipsoid}
c_{\operatorname{Hopf}}(E(a,b)) = 
\mathcal{A}_{\operatorname{Hopf}}(E(a,b)) = \min(a,b).
\end{equation}
In particular, since $E(r,r)=B^4(r)$, we obtain $c_{\operatorname{Hopf}}(B^4(r))=r$.

To prove that $c_{\operatorname{Hopf}}(Z(R))=R$, let $\Omega\subset\R^4$ be a dynamically convex domain and suppose that there exists a symplectic embedding $\varphi:\Omega\hookrightarrow Z(R)$. We need to show that $\mathcal{A}_{\operatorname{Hopf}}(\Omega)\le R$. Given $\varepsilon>0$, there exists $C>>0$ such that $\varphi(\Omega)\subset E((1+\varepsilon)R,C)$. By Step~2, Monotonicity, and equation \eqref{eqn:ellipsoid}, it follows that
\[
\mathcal{A}_{\operatorname{Hopf}}(\Omega) = c_{\operatorname{Hopf}}(\Omega) \le c_{\operatorname{Hopf}}(E((1+\varepsilon)R,C)) \le (1+\varepsilon)R.
\]
Since $\varepsilon>0$ was arbitrary, we conclude that $\mathcal{A}_{\operatorname{Hopf}}(\Omega)\le R$ as desired.

\subsubsection*{Step~4} To complete the proof of Theorem~\ref{main_fast_capacity}, we need to show that if $X\subset\R^4$ is a dynamically convex star-shaped domain, then there exists $\gamma\in \mathcal{P}_{\rm Hopf}(\partial X)$ with $\CZ(\gamma)=3$ such that $\action(\gamma)=\action_{\rm Hopf}(X)$.

Let $\{\gamma_n\}_{n=1,\ldots}$ be a sequence in $\mathcal{P}_{\rm Hopf}(\partial X)$ satisfying $\mathcal{A}(\gamma_n) < e\mathcal{A}_{\operatorname{Hopf}}(X) \ \forall n$ and $\lim_{n\to \infty}\mathcal{A}(\gamma_n)=\mathcal{A}_{\operatorname{Hopf}}(X)$. We remark that by a compactness argument and the Arzel\`a-Ascoli theorem, one can pass to a subsequence so that $\gamma_n$ converges to a Reeb orbit with action $\mathcal{A}_{\operatorname{Hopf}}(X)$. However, this not enough to conclude  the proof of the theorem, because the limiting Reeb orbit might not be simple or have Conley-Zehnder index~$3$. We will instead need to take a limit of a sequence of Reeb orbits of different contact forms.

Let $\lambda$ denote the contact form on $\partial X$. 
We can construct a sequence of functions $f_n:\partial X \to (0,1)$ with $f_n\to 0$ in $C^\infty$ such that:
\begin{itemize}
\item The contact form $e^{f_n}\lambda$ is nondegenerate and dynamically convex up to action $e^3\mathcal{A}_{\operatorname{Hopf}}(X)$.
\item
The Reeb vector field of $e^{f_n}\lambda$ is tangent to $\gamma_n$, so that
the loop $\gamma_n$, suitably reparametrized, defines a periodic orbit of the Reeb flow of~$e^{f_n}\lambda$.
\end{itemize}
The existence of $f_n$ is elementary and somewhat standard, see~\cite[Section~6]{convex}. Specifically, Lemma~6.8 from~\cite{convex} contains this existence statement where the corresponding orbit is, in addition, assumed to have ${\rm CZ}=3$, but the perturbation argument is independent of this latter assumption. The idea is quite simple. If $\hat g_n:\partial X \to \R$ satisfies $\hat g_n|_{\gamma_n}=1$ and $d\hat g_n|_{\gamma_n}=0$, then $\gamma_n$ persists as a periodic Reeb orbit of $\hat g_n\lambda$. Moreover,~$\hat g_n$ can be chosen close to~$1$ in $C^\infty$, and so that its second derivatives along~$\gamma_n$ perturb the linearized dynamics and make~$\gamma_n$, together with all its iterates, nondegenerate periodic orbits. Then, by the usual result, one can $C^\infty$-slightly perturb $\hat g_n$ to $\tilde{g}_n$ so that $\tilde{g}_n\lambda$ is a nondegenerate contact form. Since $\gamma_n$ is a nondegenerate periodic Reeb orbit of $\hat g_n\lambda$, it gets slightly perturbed to a periodic Reeb orbit of $\tilde{g}_n\lambda$ which can, via a $C^\infty$-small contact isotopy, be brought back to $\gamma_n$. After this last transformation, the contact form assumes the form~$g_n\lambda$ for some~$g_n$ $C^\infty$-close to~$1$. Take $\Delta_n>0$ small so that $g_n+\Delta_n>1$ and define $f_n$ by $e^{f_n}=g_n+\Delta_n$.

The contact forms $e^{f_n}\lambda$ on $X$ correspond to star-shaped  domains $X_n$ such that $X \subset {\rm int}(X_n)$ and $\partial X_n \to \partial X$ in $C^\infty$ as hypersurfaces.
Note that
\begin{equation}
\label{eqn:soi}
\mathcal{A}(\gamma_n) < \mathcal{A}(\widetilde{\gamma}_n)< e^{\max(f_n)}\mathcal{A}(\gamma_n) < e\mathcal{A}(\gamma_n) < e^2\mathcal{A}_{\operatorname{Hopf}}(X).
\end{equation}
In particular,
\begin{equation}
\label{eqn:squeeze}
\lim_{n\to\infty} \mathcal{A}(\widetilde{\gamma}_n) = \mathcal{A}_{\operatorname{Hopf}}(X)
\end{equation}
since $\max(f_n) \to 0$.

Now for each $n$, choose a constant $\epsilon_n\in(0,\min(f_n))$. The contact form $e^{f_n-\epsilon_n}\lambda$ is nondegenerate and dynamically convex up to action $e^2\mathcal{A}_{\operatorname{Hopf}}(X)$, since it is a constant scaling of $e^{f_n}\lambda$. This contact form corresponds to a rescaling $X'_n$ of $X_n$ satisfying $X \subset {\rm int}(X'_n)$, $X'_n \subset {\rm int}(X_n)$, and $\partial X'_n \to \partial X$ in $C^\infty$. By Lemma~\ref{lemma_hard_step} and \eqref{eqn:soi}, there exists a suitable Hopf orbit $\gamma_n'\in\mathcal{P}_{\operatorname{Hopf}}(\partial X'_n)$ such that
\begin{equation}
\label{eqn:invokelemma}
\mathcal{A}(\gamma_n') \leq \mathcal{A}(\widetilde{\gamma}_n)
\end{equation}
and $\operatorname{CZ}(\gamma_n')=3$. An application of Lemma~\ref{lemma_easy_step} now gives the desired periodic orbit $\gamma \in \mathcal{P}_{\rm Hopf}(\partial X)$ satisfying ${\rm CZ}(\gamma)=3$ and $\mathcal{A}(\gamma) = \mathcal{A}_{\rm Hopf}(X)$.
%
\end{proof}

We now prepare for the proof of Lemma~\ref{lemma_hard_step}, which occupies the rest of Section~\ref{sec:preliminaries}.


\subsection{Reeb orbits, asymptotic operators, and Conley-Zehnder indices}
\label{sec:CZ}

Let $\lambda$ be a contact form on a 3-manifold $Y$.  Let $J$ be a complex structure on the vector bundle $\xi=\ker(\lambda)$ which is compatible with $d\lambda$. Given a Reeb orbit $\gamma$ with period $T$, the {\bf asymptotic operator} is the unbounded operator $L_{\gamma,J}$ on $L^2(\gamma(T\cdot)^*\xi)$ sending
\[
\eta \longmapsto -J\nabla_t\eta
\]
where $\nabla$ is the connection on $\gamma(T\cdot)^*\xi$ induced by the linearized Reeb flow.
One can check that this operator is self-adjoint when $L^2(\gamma(T\cdot)^*\xi)$ is equipped with the inner-product induced by $(d\lambda,J)$.

The following are some key facts about the spectral theory of the asymptotic operator, which follow from perturbation theory as explained in~\cite[\S3]{props2}.
The asymptotic operator has discrete spectrum, consisting of eigenvalues whose geometric and algebraic multiplicities coincide, and the spectrum accumulates at $\pm\infty$. 
The eigenvectors are nowhere vanishing sections of $\gamma(T\cdot)^*\xi$ since they solve linear ODEs. Hence they have well-defined winding numbers with respect to a $d\lambda$-symplectic trivialization $\tau$ of $\gamma(T\cdot)^*\xi$. The winding number of an eigenvector depends only on the eigenvalue. So to an eigenvalue $\nu$ one can associate the winding number
\[
\wind_\tau(\nu) \in \Z.
\]
For every $k\in\Z$ there are precisely two eigenvalues counted with multiplicity satisfying $\wind_\tau = k$, and $\nu_1\leq\nu_2 \Rightarrow \wind_\tau(\nu_1)\leq\wind_\tau(\nu_2)$. For $\delta \in \R$ we set
\begin{equation*}
\begin{aligned}
\alpha^{<\delta}_\tau(\gamma) &= \max \ \{ \wind_\tau(\nu) \mid \nu \ \text{eigenvalue}, \ \nu<\delta \} \\
\alpha^{\geq\delta}_\tau(\gamma) &= \min \ \{ \wind_\tau(\nu) \mid \nu \ \text{eigenvalue}, \ \nu\geq\delta \} \\
p^{\delta}(\gamma) &= \alpha^{\geq\delta}_\tau(\gamma) - \alpha^{<\delta}_\tau(\gamma) \in \{0,1\} \, .
\end{aligned}
\end{equation*}
These numbers do not depend on $J$.

We define the {\bf constrained Conley--Zehnder index\/}
\begin{equation}
\label{eqn:ccz}
\CZ^\delta_\tau(\gamma) = 2 \alpha^{<\delta}_\tau(\gamma) + p^{\delta}(\gamma).
\end{equation}
We note that $\CZ_\tau^\delta$ is a lower semi-continuous function of the asymptotic operator $L_{\gamma,J}$ with respect to the $C^\infty$ norm. We define the {\bf Conley-Zehnder index\/} $\CZ_\tau(\gamma)=\CZ_\tau^0(\gamma)$, i.e.\ by taking $\delta=0$ in \eqref{eqn:ccz}. 

If $\Omega\subset\R^4$ is a star-shaped domain and $Y=\partial \Omega$, then unless otherwise stated we choose $\tau$ to be a global trivialization of $\xi=\ker(\lambda_0|_Y)$, and for any Reeb orbit $\gamma\in\P(Y)$, we define
\[
\CZ(\gamma)=\CZ_\tau^0(\gamma).
\]


\subsection{Pseudoholomorphic curves in symplectizations}
\label{ssec_pseudo_hol_curves}

Let $J$ be a compatible complex structure on the symplectic vector bundle~$(\xi,d\lambda)$. As in \cite{93}, consider an almost complex structure $\jtil$ defined on $\R\times Y$ by
\begin{equation}\label{R_inv_J}
\jtil\partial_a = R_\lambda, \qquad \jtil|_{\xi} = J
\end{equation}
where $R_\lambda$ and $\xi$ are seen as $\R$-invariant objects in $\R\times Y$. Then $\jtil$ is $\R$-invariant. We say that an almost complex structure $\jtil$ on $\R\times Y$ constructed this way is {\bf $\lambda$-compatible\/}.

Consider a closed Riemann surface $(S,j)$, a finite set $\Gamma \subset S$ of ``punctures'', and a holomorphic map
\[
\util = (a,u) : (S\setminus\Gamma,j) \longrightarrow (\R\times Y,\jtil)
\]
Define the {\bf Hofer energy\/}
\[
E(\util) = \sup_{\phi} \int_{S\setminus\Gamma} \util^*d(\phi\lambda).
\]
where the supremum is taken over the set of $\phi:\R \to [0,1]$ satisfying $\phi'\geq0$. If the Hofer energy $E(\util)<\infty$, then we say that $\util$ is a {\bf finite energy curve\/}.

A puncture $z\in\Gamma$ is {\bf positive\/} or {\bf negative\/} if $a(w)\to+\infty$ or $a(w)\to-\infty$ when $w\to z$, respectively. The puncture $z$ is {\bf removable} if $\limsup |a(w)| < \infty$ when $w\to z$. It turns out that for a finite energy curve, every puncture is positive, negative or removable, and $\util$ can be smoothly extended across a removable puncture; see \cite{93}. So we can assume without loss of generality that $\Gamma$ does not contain removable punctures, and we denote by $\Gamma_+$ and $\Gamma_-$ the sets of positive and negative punctures, respectively.

Let $z\in\Gamma$ and let $K\subset S$ be a conformal disk centered at $z$, meaning that there is a biholomorphism $\varphi : (K,j,z) \to (\D,i,0)$. Then $K\setminus\{z\}$ admits positive holomorphic polar coordinates $(s,t) \in [0,+\infty) \times \R/\Z$ defined by $(s,t) \simeq \varphi^{-1}(e^{-2\pi(s+it)})$, and negative holomorphic polar coordinates $(s,t) \in (-\infty,0] \times \R/\Z$ defined by $(s,t) \simeq \varphi^{-1}(e^{2\pi(s+it)})$.

As at the beginning of \S\ref{sec:preliminaries}, we say that $\lambda$ is nondegenerate up to action $C$ if all Reeb orbits of period less than $C$ are nondegenerate. If all Reeb orbits are nondegenerate, we say that $\lambda$ is nondegenerate.

\begin{theorem}[\cite{props1}]
Suppose that $\lambda$ is nondegenerate up to action $C$ and that $z$ is a nonremovable puncture of a finite energy curve $\util=(a,u)$ in $(\R\times Y,\jtil)$ with Hofer energy $E(\util)\leq C$. Let $(s,t)$ be positive holomorphic polar coordinates near $z$. There exist a Reeb orbit $\gamma$ with period $T$ and $d\in\R$ such that $u(s,t) \to \gamma(\epsilon Tt)$ in $C^\infty(\R/\Z,Y)$ as $s\to+\infty$, where $\epsilon = \pm1$ is the sign of the puncture. 
\end{theorem}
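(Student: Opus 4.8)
The plan is to follow the asymptotic analysis of Hofer and of Hofer--Wysocki--Zehnder \cite{props1,93}, carried out on the cylindrical end and made to use the hypotheses only up to action $C$. Write the end determined by the positive holomorphic polar coordinates $(s,t)\in[s_0,\infty)\times\R/\Z$, and abbreviate $u_s:=u(s,\cdot):\R/\Z\to Y$; the puncture $z$ is approached as $s\to+\infty$. The three tasks are: (i) identify the asymptotic period and show it is nonzero; (ii) bound the gradient on the end; (iii) extract a $C^\infty$ limit loop, identify it as a Reeb orbit, and upgrade subsequential convergence to genuine convergence to a single $\gamma$.

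\textbf{Step 1 (asymptotic period).} First I would show that the limit $m(z)=\lim_{s\to\infty}\int_{\{s\}\times\R/\Z}u^*\lambda$ exists. Since $\util$ is $\jtil$-holomorphic and $J$ is $d\lambda$-compatible, $\util^*d\lambda\ge 0$, so by Stokes' theorem the function $s\mapsto\int_{\{s\}\times\R/\Z}u^*\lambda$ is nondecreasing, with total increment equal to the finite $d\lambda$-energy on the end; hence the limit exists, and $E(\util)\le C$ forces $\abs{m(z)}\le C$. I would then argue $m(z)\ne 0$: if $m(z)=0$ then both the $d\lambda$-energy and the $\R$-component oscillation over the end tend to zero, and a removal-of-singularity argument makes $z$ removable, contrary to hypothesis. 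Thus $\abs{m(z)}$ is a positive number $\le C$; it will turn out to equal the period $T$, with $\epsilon=\operatorname{sign}(m(z))$, and since $T\le C$ the resulting orbit lies in the nondegenerate range.

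\textbf{Step 2 (gradient bound; the main obstacle).} The crux is a uniform bound $\sup_{s\ge s_1}\norm{\nabla\util(s,\cdot)}_{C^0}<\infty$ on a smaller end, i.e. the absence of bubbling at $z$. I expect this to be the hardest step, since it is where the genuine PDE analysis enters. Suppose the gradient blows up along a sequence of points approaching $z$; rescaling there produces in the limit a nonconstant finite-energy $\jtil$-holomorphic plane. Every nonconstant finite-energy plane has $d\lambda$-energy bounded below by the minimal period of Reeb orbits, which is a positive constant (the Reeb field is nowhere zero on compact $Y$). But the $d\lambda$-energy concentrated on arbitrarily small neighborhoods of $z$ tends to $0$ by Step 1, so no such plane can form. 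Making this precise via the monotonicity lemma for $\jtil$-holomorphic curves and the isoperimetric inequality yields the gradient bound; note that this step is purely energetic and does not use nondegeneracy.

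\textbf{Step 3 (convergence).} With the gradient bound, elliptic bootstrapping gives uniform $C^\infty$ bounds for the loops $u_s$, so by Arzel\`a--Ascoli any sequence $s_n\to\infty$ has a subsequence with $u_{s_n}\to x$ in $C^\infty(\R/\Z,Y)$. Because the $d\lambda$-energy over $\{s\ge s_n\}$ tends to $0$, the $\xi$-component of $\partial_s u$ decays and $\partial_t u$ becomes parallel to $R_\lambda$, whence the limit satisfies $\dot x=m(z)\,R_\lambda(x)$; that is, $x(t)=\gamma(\epsilon Tt)$ parametrizes a Reeb orbit of period $T=\abs{m(z)}$ traversed with orientation $\epsilon$. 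Finally I would promote subsequential convergence to convergence of the whole family to a single orbit: the set of limit loops is compact and connected, and the nondegeneracy of the limit orbit (valid since $T\le C$) rigidifies it, forcing convergence to one reparametrized $\gamma$. The constant $d$ records the asymptotics of the $\R$-component, namely $a(s,t)-m(z)\,s\to d$, which follows from $\partial_s a=\lambda(\partial_t u)\to m(z)$ once the loops have converged.
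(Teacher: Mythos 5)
This theorem is not proved in the paper at all: it is quoted from Hofer--Wysocki--Zehnder \cite{props1} (with the subsequential statement going back to \cite{93}), so your proposal has to be measured against that proof. Your Steps 1 and 2 follow the standard line of \cite{93}: monotonicity of $s\mapsto\int_{\{s\}\times\R/\Z}u^*\lambda$ produces the mass, and a bubbling-off analysis gives the gradient bound. Two repairable inaccuracies there: the quantization fact you should invoke for the rescaling limit is the elementary lemma that a finite-energy plane with \emph{vanishing} $d\lambda$-energy is constant, not that nonconstant finite-energy planes have $d\lambda$-energy at least the minimal period --- the latter is normally deduced from the very asymptotic theorem you are proving, so as written Step 2 is circular; and your removability argument in Step 1 already presupposes the gradient bounds of Step 2, so the steps must be reordered.

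The genuine gap is in Step 3, and it is exactly the part that is the content of \cite{props1} beyond \cite{93}. Your soft argument --- the limit set of the loops $u(s,\cdot)$ is compact and connected, nondegenerate orbits are isolated, hence the limit set lies on a single orbit --- only yields convergence to $\gamma$ \emph{modulo reparametrization}: the set $\{t\mapsto\gamma(\epsilon Tt+c)\mid c\in\R/T\Z\}$ is itself a connected circle in the loop space, so connectedness cannot exclude that the phase $c(s)$ of the best-approximating rotation drifts indefinitely (say like $\log s$) as $s\to+\infty$, with different subsequential limits being different rotations of $\gamma$. Note that the decay $|\partial_s u|\to 0$ only gives $c'(s)\to 0$, which does not make $c(s)$ converge; one needs $c'$ to be integrable. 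The theorem asserts convergence of the parametrized loops to one parametrized orbit, and ruling out phase drift is precisely where nondegeneracy enters quantitatively: Hofer--Wysocki--Zehnder work in coordinates near the orbit and use the spectral gap of the asymptotic operator $L_{\gamma,J}$ to derive a differential inequality forcing exponential decay to a fixed reparametrization --- the same mechanism that produces the asymptotic eigenvalue refinement recorded in Theorem~\ref{thm_asymptotics}. Your proposal contains no substitute for this analysis, and, contrary to your assessment, it is this step rather than the gradient bound that is the hard part.
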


The Reeb orbit $\gamma$ above is called the {\bf asymptotic limit} of $\util$ at $z$.

Let $\gamma$ be a Reeb orbit of $\lambda$ with period $T$. We say that $\gamma$ is {\bf multiply covered\/} if $\gamma$ is not simple, i.e.\ if $\gamma(T')=\gamma(0)$ for some $T'\in (0,T)$. In that case, the {\bf covering multiplicity\/} of $\gamma$ is $k=T/T_0$ where $T_0$ is the minimal period.

Consider the space $\R/\Z\times\C$ equipped with coordinates $(\vartheta,z=x_1+ix_2)$ and contact form $\beta_0 = d\vartheta + x_1dx_2$. 

\begin{definition}
Let $\gamma$ be a Reeb orbit of $\lambda$ with period $T$ and converging multiplicity $k$. A {\bf Martinet tube\/} for $\gamma$ is a smooth diffeomorphism $\Psi:\mathcal{N} \to\R/\Z\times \D$ defined on a smooth compact neighborhood $\mathcal{N}$ of $\gamma(\R)$ such that: 
\begin{itemize}
\item $\Psi(\gamma(T\vartheta/k)) = (\vartheta,0)$ for all $\vartheta \in \R/\Z$.
\item On $\mathcal{N}$ we have $\lambda = \Psi^*(g\beta_0)$, where $g : \R/\Z\times \D \to (0,+\infty)$ is smooth and satisfies $g(\vartheta,0) = T/k$, $dg(\vartheta,0) = 0$ for all $\vartheta \in \R/\Z$.
\end{itemize}
\end{definition}

For the existence of Martinet tubes the reader is referred to~\cite[Lemma~2.3]{props1}.

\begin{theorem}[\cite{props1,sie_CPAM}]
\label{thm_asymptotics}
Suppose that $\lambda$ is nondegenerate up to action $C>0$ and that $z$ is a nonremovable puncture of sign $\epsilon = \pm1$ of a finite energy curve $\util=(a,u)$ with Hofer energy $E(\util)\leq C$. Let $(s,t)$ be positive or negative holomorphic polar coordinates at $z$ when $\epsilon=+1$ or $\epsilon=-1$, respectively. Let $\Psi:\mathcal{N} \to\R/\Z\times \D$ be a Martinet tube for the asymptotic limit $\gamma$ of $\util$ at $z$, and let $s_0\gg1$ such that $|s|\geq s_0 \Rightarrow u(s,t) \in \mathcal{N}$. Write $\Psi(u(s,t)) = (\vartheta(s,t),z(s,t))$ for $|s|\geq s_0$. By applying a rotation we can assume $u(s,0) \to \gamma(0)$ as $\epsilon s\to+\infty$. 

If $z(s,t)$ does not vanish identically then the following holds. There exists $r>0$ and an eigenvalue $\nu$ of the asymptotic operator $L_{\gamma,J}$ satisfying $\epsilon\nu<0$, such that:
\begin{itemize}
\item There exist $c,d\in\R$ and a lift $\tilde\vartheta:\R\times\R \to \R$ of $\vartheta(s,t)$ such that $$ \lim_{\epsilon s\to+\infty} \sup_{t\in\R/\Z} e^{r\epsilon s} \left( |D^\beta[a(s,t)-Ts-c]| + |D^\beta[\tilde\vartheta(s,t)-kt]| \right) = 0  $$ holds for every partial derivative $D^\beta = \partial^{\beta_1}_s\partial^{\beta_2}_t$, where $k$ is the covering multiplicity of $\gamma$.
\item There exists an eigenvector of $\nu$, represented as a nowhere vanishing vector field $v(t)$ in the frame $\{\partial_{x_1},\partial_{x_2}\}$ along $\gamma$, such that
\begin{equation*}
z(s,t) = e^{\nu s} (v(t)+R(s,t))
\end{equation*}
for some $R(s,t)$ satisfying $|D^\beta R(s,t)| \to 0$ in $C^0(\R/\Z)$ as $\epsilon s\to+\infty$, for every partial derivative $D^\beta = \partial^{\beta_1}_s\partial^{\beta_2}_t$.
\end{itemize}
\end{theorem}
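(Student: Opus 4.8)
The statement is the asymptotic analysis of finite-energy curves due to Hofer--Wysocki--Zehnder and sharpened by Siefring, so my plan is to reproduce the analytic strategy of those works. I treat the positive puncture $\epsilon=+1$; the negative case follows by the substitution $s\mapsto -s$. The first step is to write the equation inside the Martinet tube. Using the frame $\{\partial_{x_1},\partial_{x_2}\}$ along $\gamma$ to regard $z(s,t)$ as an $\R^2$-valued (equivalently $\C$-valued) function, the condition that $\util=(a,u)$ be $\jtil$-holomorphic becomes a first order elliptic system for $(a,\vartheta,z)$. Because $g(\vartheta,0)=T/k$ and $dg(\vartheta,0)=0$, the system decouples to leading order: the Reeb-direction components $(a,\vartheta)$ are driven by terms that are quadratic in $z$ and its first derivatives, while the transverse component satisfies, schematically,
\[
\partial_s \zeta(s) = L_{\gamma,J}\,\zeta(s) + r(s),\qquad \zeta(s)=z(s,\cdot),
\]
where $r(s)$ is a nonlinear remainder. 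The preceding convergence theorem already gives $u(s,\cdot)\to\gamma$ in $C^\infty$, hence $\zeta(s)\to 0$ and $\|r(s)\|\le h(s)\,\|\zeta(s)\|_{C^1}$ with $h(s)\to 0$.

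The core of the argument is an asymptotic spectral lemma for this equation: if $\zeta\not\equiv 0$ is a solution decaying to $0$, then there is an eigenvalue $\nu$ of $L_{\gamma,J}$ with $\epsilon\nu<0$ and an eigenvector $v$ such that $\zeta(s)=e^{\nu s}(v+R(s))$ with $R(s)\to 0$. To prove this I would first invoke the similarity principle to show that the zeros of $z$ are isolated, so that in the case at hand $\zeta(s)\neq 0$ for all large $s$. Setting $E(s)=\tfrac12\langle\zeta(s),\zeta(s)\rangle_{L^2}$ and the Rayleigh quotient $\alpha(s)=\langle L_{\gamma,J}\zeta(s),\zeta(s)\rangle/\langle\zeta(s),\zeta(s)\rangle$, a direct computation using self-adjointness of $L_{\gamma,J}$ gives $E'(s)=(\alpha(s)+o(1))\,2E(s)$, together with a differential inequality showing that $\alpha$ is asymptotically monotone and bounded, hence convergent. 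The limit must be an eigenvalue $\nu$: away from the spectrum the self-adjoint flow would repel $\alpha$, and the sign $\epsilon\nu<0$ is forced by $\zeta(s)\to 0$. Integrating $E'/E$ then yields the exponential rate $e^{\nu s}$, and projecting $\zeta(s)$ onto the $\nu$-eigenspace while estimating the complementary components via the spectral gap produces the eigenvector $v$ and the decaying remainder $R$.

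With the transverse asymptotics in hand, I would recover the Reeb-direction components. Since the driving terms for $\partial_s a$ and $\partial_s\vartheta$ are quadratic in $z$, they decay like $e^{2\nu s}$, which is integrable; integrating from $s$ to $+\infty$ produces the constant $c$ and a lift $\tilde\vartheta$ (whose additive normalization is the constant $d$), together with the bounds $|a(s,t)-Ts-c|,\,|\tilde\vartheta(s,t)-kt|=O(e^{2\nu s})$, so that any $0<r<-2\nu$ works in the first bullet. Finally, a bootstrap using interior elliptic estimates on the unit squares $[s-1,s+1]\times\R/\Z$, combined with the exponential decay already obtained, upgrades all the $C^0$ statements to uniform convergence of every derivative $D^\beta$, giving the $C^\infty$ form of the estimates.

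The hard part will be the spectral lemma of the second paragraph. The delicate point is not the convergence of $\alpha(s)$ to some spectral value, but the proof that the limit is an honest eigenvalue and that the normalized solution $\zeta(s)/\|\zeta(s)\|$ converges to a single eigenvector without spiraling inside a higher-dimensional eigenspace. Controlling this requires the sharp exponential decay estimates of Hofer--Wysocki--Zehnder and, for the uniform-in-all-derivatives remainder bound, Siefring's refinement; the nonlinear remainder $r(s)$ must be absorbed carefully so that it does not destroy the monotonicity that drives the eigenvalue selection. Once this lemma is in place, the identification of $\nu$ as an eigenvalue of $L_{\gamma,J}$ with the prescribed sign, and its winding-number interpretation recorded in \S\ref{sec:CZ}, follow directly.
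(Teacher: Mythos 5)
The paper gives no proof of this theorem: it is quoted directly from Hofer--Wysocki--Zehnder \cite{props1} and Siefring \cite{sie_CPAM}, and your proposal reconstructs exactly the strategy of those references (Cauchy--Riemann system in the Martinet tube, the Rayleigh-quotient/spectral-gap lemma identifying the decay rate as an eigenvalue of the asymptotic operator with the correct sign, integration of the quadratically-forced Reeb-direction components, and elliptic bootstrap to all derivatives). Your outline is correct in structure and honestly flags the genuinely hard analytic step -- convergence of the normalized solution to a single eigenvector -- which is precisely the technical core carried out in \cite{props1} and sharpened in \cite{sie_CPAM}.
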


The eigenvalue $\nu$ provided by Theorem~\ref{thm_asymptotics} is called the {\bf asymptotic eigenvalue\/} of $\util$ at the puncture $z$.

\begin{remark}
\label{rem:trivial_puncture}
The alternative $z(s,t) \equiv 0$ can be expressed independently of coordinates as saying that the end of the domain of $\util$ corresponding to the puncture is mapped into the ``trivial cylinder'' $\R\times\gamma$ where $\gamma$ is the asymptotic limit. In this case we say that $\util$ has {\bf trivial\/} asymptotic behavior at the puncture. Otherwise, the asymptotic behavior is said to be {\bf nontrivial\/} at the puncture.
\end{remark}

We now recall some topological invariants introduced in \cite{props2}. Let $\util=(a,u)$ be a finite energy curve in $(\R\times Y,\jtil)$. Assume that $\lambda$ is nondegenerate up to action $E(\util)$. Let 
\begin{equation}
\label{proj_ctct_str}
\pi_\lambda : TY \longrightarrow \xi
\end{equation}
denote the projection along the Reeb vector field $R_\lambda$. It can be shown using the Carleman similarity principle that if $\pi_\lambda \circ du$ does not vanish identically, then its zeros are isolated and count positively. Theorem~\ref{thm_asymptotics} further implies that there are finitely many zeros in this case. So if $\util$ is nontrivial, then the algebraic count of zeros of $\pi_\lambda\circ du$ is a well-defined non-negative number, which following \cite{props2} we denote by $\wind_\pi(\util)$. The inequality ${\rm wind}_\pi(\tilde{u}) \geq 0$ could be thought of as some kind of ``infinitesimal'' positivity of intersections. This motivates and clarifies a bit the meaning and the usefulness of the invariant ${\rm wind}_\pi$.

Fix a $d\lambda$-symplectic trivialization $\tau$ of $u^*\xi$. Let $z$ be a puncture of $\util$ with asymptotic limit $\gamma$ with period $T$. Let $\wind_\infty(\util,z,\tau) = \wind_\tau(\nu) \in \Z$, where $\nu$ is the asymptotic eigenvalue of $\util$ at $z$. Finally, define
\[
\wind_\infty(\util) = \sum_{z\in\Gamma_+} \wind_\infty(\util,z,\tau) - \sum_{z\in\Gamma_-}  \wind_\infty(\util,z,\tau).
\]
Note that $\wind_\infty(\util)$ does not depend on the choice of trivialization $\tau$ of $u^*\xi$. In fact, since $\pi_\lambda\circ du$ is a section of the complex line bundle $T^{1,0}S\otimes \xi$, it follows from degree theory that
\begin{equation}
\label{identity_winds}
\wind_\pi(\util) = \wind_\infty(\util) - \chi(S\setminus\Gamma).
\end{equation}
See~\cite[Proposition~5.6]{props2}.

Denote by $(\overline{\C} = \C \cup \{\infty\},i)$ the Riemann sphere. For the next two definitions consider a finite-energy plane $\util = (a,u) : (\C,i) \to (\R\times Y,\jtil)$, and assume that $\lambda$ is nondegenerate up to action $E(\util)$. By Stokes theorem, $\infty$ must be a positive puncture, and the similarity principle implies that $\int_\C u^*d\lambda>0$. 

\begin{definition}
[\cite{fast}]
\label{def_fast_plane}
The plane $\util$ is {\bf fast} if $\wind_\infty(\util)=1$.
\end{definition}

\begin{definition}\label{def_cov_plane}
The {\bf asymptotic covering multiplicity} ${\rm cov}(\util)$ of the plane $\util$ is the covering multiplicity of its asymptotic limit.
\end{definition}

\begin{lemma}\label{lemma_fast_symplectisations}
If $\util=(a,u)$ is a fast plane, then $\util$ is somewhere injective and the map $u:\C\to Y$ is an immersion which is transverse to $R_\lambda$.
\end{lemma}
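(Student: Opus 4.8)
The plan is to prove Lemma~\ref{lemma_fast_symplectisations} by exploiting the identity~\eqref{identity_winds} together with the positivity of the winding invariants. Since $\util$ is a plane, the domain is $S = \overline{\C}$ with a single positive puncture, so $\Gamma = \Gamma_+ = \{\infty\}$ and $S \setminus \Gamma = \C$, whence $\chi(S\setminus\Gamma) = 1$. By the definition of a fast plane, $\wind_\infty(\util) = 1$. Plugging into~\eqref{identity_winds} gives
\[
\wind_\pi(\util) = \wind_\infty(\util) - \chi(S\setminus\Gamma) = 1 - 1 = 0.
\]
Because $\wind_\pi$ is the algebraic (and positive) count of zeros of $\pi_\lambda \circ du$, the vanishing $\wind_\pi(\util)=0$ forces $\pi_\lambda \circ du$ to be nowhere zero. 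This is the crux: $\pi_\lambda \circ du$ nowhere vanishing says precisely that $du$ is never tangent to the $\R$-direction combined with $R_\lambda$, i.e. that the projection of $u$ to $Y$ is an immersion transverse to $R_\lambda$.

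Next I would translate this analytic fact into the two geometric conclusions. For transversality to $R_\lambda$: at each point, $du$ maps the tangent plane of $\C$ into $TY$, and $\pi_\lambda \circ du \neq 0$ means the image of $du$ is never contained in the span of $R_\lambda$; since $du$ has real rank $2$ wherever $\pi_\lambda\circ du \neq 0$ (one checks the $\jtil$-holomorphicity forces the image to be a $J$-complex line in $\xi$ plus possibly the Reeb direction, and non-vanishing of the $\xi$-part makes it a genuine $2$-plane mapping isomorphically to a complex line in $\xi$), the map $u$ is an immersion whose image meets $\xi$ transversally to $R_\lambda$. I would make this precise by writing $du$ in the splitting $TY = \R R_\lambda \oplus \xi$ and using the $\jtil$-holomorphic relation~\eqref{R_inv_J} to see that the $\xi$-component $\pi_\lambda \circ du$ being an isomorphism onto a complex line is equivalent to $u$ being an immersion transverse to $R_\lambda$.

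For somewhere injectivity I would argue by contradiction using the structure theory of finite-energy curves: if $\util$ were multiply covered, it would factor as $\util = \vtil \circ \phi$ through a somewhere-injective finite-energy plane $\vtil$ and a holomorphic branched cover $\phi$ of degree $d \geq 2$. The asymptotic limit of $\util$ would then be a $d$-fold (or higher) iterate, and comparing $\wind_\infty$ of the cover to that of $\vtil$ — each eigenvector winding of the asymptotic eigenvalue scales with the covering — would force $\wind_\infty(\util) \geq d \geq 2$, contradicting $\wind_\infty(\util)=1$. Alternatively, and more cleanly, I would observe that the asymptotic eigenvalue of a fast plane has eigenvectors of winding number $1$ (this is the meaning of ``fast,'' cf. Remark~\ref{rem:fastplanes}), and that the minimal possible winding number at a positive puncture is $1$ by the topological constraint mentioned in Remark~\ref{rem:fastplanes}; a nontrivial cover would push the winding up. I expect the \textbf{main obstacle} to be making the somewhere-injectivity argument fully rigorous: one must invoke the factorization theorem for finite-energy curves and carefully track how $\wind_\infty$ and the asymptotic eigenvalue behave under covers, rather than the immersion/transversality statement, which follows almost immediately from $\wind_\pi(\util)=0$.
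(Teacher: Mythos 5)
Your proposal is correct, and its first half (immersion and transversality) is exactly the paper's argument: $\wind_\pi(\util)\ge 0$ together with \eqref{identity_winds} and $\wind_\infty(\util)=1$, $\chi(\C)=1$ forces $\wind_\pi(\util)=0$, so $\pi_\lambda\circ du$ never vanishes. Where you diverge is the somewhere-injectivity step, and here the paper is notably slicker than what you propose. Like you, it factors a hypothetical multiple cover as $\util=\vtil\circ\phi$ through an underlying somewhere injective plane, noting that $\phi$ is then a polynomial map $\C\to\C$ of degree $\ge 2$; but instead of tracking how the asymptotic eigenvalue and its winding scale under the cover (your route, which does work: $\wind_\infty(\util)=d\cdot\wind_\infty(\vtil)\ge d\ge 2$, but requires the asymptotic formula's behavior under covers), it simply observes that a degree-$\ge 2$ polynomial has a critical point, at which $d\util=d\vtil\circ d\phi$ vanishes --- contradicting the immersion property \emph{already established} in the first half. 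In other words, the ``main obstacle'' you correctly identified (making the winding-under-covers bookkeeping rigorous) is avoided entirely by the paper: the two conclusions of the lemma are not proved in parallel, but the first is used to kill the second in one line via the fundamental theorem of algebra. Both arguments are valid; yours is more robust in spirit (it would still apply in settings where one only controls $\wind_\infty$ rather than the immersion property), while the paper's buys brevity and avoids any further asymptotic analysis.
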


\begin{proof}
Since $\wind_\pi(\util)\ge 0$, it follows from~\eqref{identity_winds} that $\wind_\pi(\util)=0$, and hence $u$ is an immersion which is transverse to $R_\lambda$. If $\util$ is not somewhere injective then it covers another plane via a polynomial map of degree~$\geq2$, and the covering map must have at least one critical point, contradicting the fact that $u$ is an immersion.
\end{proof}


\subsection{Pseudoholomorphic curves in strong symplectic cobordisms}
\label{ssec_curves_cobodisms}

Consider a compact symplectic $4$-manifold  $(W,\omega)$ for which there exists a Liouville vector field $V$ near $Y = \partial W$ transverse to $Y$. The $1$-form $\alpha = i_V\omega$ defined near $Y$ satisfies $d\alpha=\omega$ and restricts to a contact form on $Y$ which we denote by $\lambda$. We denote the associated contact structure by $\xi = \ker \lambda \subset TY$. The boundary splits as $Y = Y^+ \sqcup Y^-$, where $V$ points out along $Y^+$ and points in along $Y^-$. The orientation of $Y$ as the boundary of $W$ agrees with the contact orientation on $Y^+$ and disagrees with it on $Y^-$. We call $(W,\omega)$ a {\bf strong symplectic cobordism} that is {\bf convex} at $(Y^+,\lambda)$ and {\bf concave} at $(Y^-,\lambda)$. The local flow $\varphi^t_V$ of $V$ defines, for $\varepsilon>0$ small enough, diffeomorphisms
\begin{equation}
\Phi^+ : (-\varepsilon,0] \times Y^+ \longrightarrow \mathcal{U}^+, \quad \Phi^- : [0,\varepsilon) \times Y^- \longrightarrow \mathcal{U}^-, \quad \Phi^\pm(a,p) = \varphi^a_V(p)
\end{equation}
onto neighborhoods $\mathcal{U}^\pm$ of $Y^\pm$ in $W$. We have $(\Phi^\pm)^*\alpha = e^a\lambda$, where here we still write $\lambda$ for the pull-back of $\lambda$ under the projection $\R \times Y \to Y$. The symplectic form pulls back to
\[
\left(\Phi^\pm\right)^*\omega = d(e^a\lambda) = e^a ( da \wedge \lambda + d\lambda ).
\]

The {\bf symplectic completion\/} of $(W,\omega)$ is the symplectic manifold $(\overline{W},\overline{\omega})$ defined by
\begin{equation}
\overline{W} = \left(\left(\left(-\varepsilon,+\infty\right) \times Y^+\right) \sqcup W \sqcup \left(\left(-\infty,\varepsilon\right) \times Y^- \right)\right) / \sim
\end{equation}
where points are identified according to
\begin{equation*}
\mathcal{U}^+ \ni \Phi^+(a,p)   \sim (a,p) \in (-\varepsilon,0] \times Y^+, \quad\quad  \mathcal{U}^- \ni \Phi^-(a,p)  \sim (a,p) \in [0,\varepsilon) \times Y^-
\end{equation*}
and the symplectic form $\overline{\omega}$ is defined by
\[
\overline{\omega} = \omega  \ \text{on} \ W \qquad\qquad \overline{\omega} = d(e^a\lambda) \ \text{on} \ (-\varepsilon,+\infty) \times Y^+ \ \text{and on} \ (-\infty,\varepsilon) \times Y^-.
\]

Fix a $\lambda$-compatible almost complex structure $\jtil$ on $\R\times Y$. 
Consider an almost complex structure $\jbar$ on $\overline{W}$ that is $\omega$-compatible on $W$, and that agrees with $\jtil$ on $[0,+\infty)\times Y^+ \cup (-\infty,0] \times Y^-$ as in~\eqref{R_inv_J}. It follows that $\jbar$ is $\overline{\omega}$-compatible. As in \cites{93,sft_comp} we consider a closed Riemann surface $(S,j)$, a finite set $\Gamma \subset S$ and a holomorphic map
\[
\util:(S\setminus\Gamma,j) \longrightarrow (\overline{W},\jbar).
\]
We assume the finite energy condition $0< E(\util)<\infty$, where the energy $E(\util)$ is defined by
\[
\begin{split}
E(\util) = &\int_{\util^{-1}(W)} \util^*\omega
\\
&+ \ \sup_{\phi} \left\{ \int_{\util^{-1}([0,+\infty)\times Y^+)} \util^*d(\phi\lambda) + \int_{\util^{-1}((-\infty,0] \times Y^-)} \util^*d(\phi\lambda) \right\}.
\end{split}
\]
Here the supremum is taken over the set of smooth $\phi : \R \to [0,1]$ satisfying $\phi'\geq0$. The finite energy condition implies that punctures are either removable or behave like the nonremovable punctures as explained in \S\ref{ssec_pseudo_hol_curves}. More precisely, if $(s,t)$ are positive holomorphic polar coordinates at a positive puncture $z\in\Gamma$ then $\util(s,t) \in [0,+\infty)\times Y^+$ when $s\gg1$, and if one writes $\util=(a,u)$ for $s\gg1$ then $a(s,t) \to+\infty$ as $s\to+\infty$. There is a similar behavior at negative punctures. Moreover, if $\lambda$ is nondegenerate up to action $E(\util)$, then there is an asymptotic limit $P$ at a nonremovable puncture, and all the conclusions of Theorem~\ref{thm_asymptotics} hold. In particular, we can talk about nontrivial versus trivial asymptotic behavior and asymptotic eigenvalues, see Remark~\ref{rem:trivial_puncture}. 

Now let $\util = (a,u) :(\C,i) \to (\W,\jbar)$ be a finite energy plane, where $\infty$ is  a positive puncture\footnote{Note that for a general $(W,\omega)$ the puncture of a plane might be negative. However it will always be a positive puncture when $\omega$ is exact.}.  Take $R>0$ large enough so that $u(\C\setminus B_R(0)) \subset [0,+\infty) \times Y^+$. Then there is a splitting 
\begin{equation}
\util^*T\W|_{\C\setminus B_R(0)} = (\util|_{\C\setminus B_R(0)})^*\left<\partial_a,R_\lambda\right> \oplus (u|_{\C\setminus B_R(0)})^*\xi
\end{equation}
where $\left<\partial_a,R_\lambda\right>$ denotes the subbundle of $T([0,+\infty)\times Y^+)$ spanned by $\{\partial_a,R_\lambda\}$. There is a $d\lambda$-symplectic frame $\{e_1,e_2\}$ of $(u|_{\C\setminus B_R(0)})^*\xi$ such that $\{\partial_a,R_\lambda,e_1,e_2\}$ extends to a $\overline{\omega}$-symplectic frame of $\util^*T\W$. Just take, for instance, a frame that extends to a global frame of $u^*\xi$. This frame is unique up to homotopy: any two such frames would have relative Maslov index equal to zero along any loop in their domain of definition~$\C$. If $\lambda$ nondegenerate up to action $E(\util)$, then $\util$ has an asymptotic limit $\gamma$ in $Y^+$, of period~$T>0$, and it follows from the asymptotic behavior that, up to homotopy, $\{e_1,e_2\}$ induces a $d\lambda$-symplectic trivialization $\tau_{\util}$ of $\gamma(T\cdot)^*\xi$. Define
\begin{equation}
\CZ(\util) = \CZ_{\tau_{\util}}(\gamma),
\end{equation}
If the asymptotic behavior of $\util$ is nontrivial, define
\begin{equation}
\wind_\infty(\util) = \wind_{\tau_{\util}}(\nu)
\end{equation}
where $\nu$ is the asymptotic eigenvalue of $\util$ at the puncture $\infty$. If the asymptotic behavior is trivial then we define
\begin{equation}
\wind_\infty(\util) = -\infty.
\end{equation}
Under the above nondegeneracy assumption, we define a plane with a positive puncture to be {\bf fast} if $\wind_\infty(\util)\leq1$. Unlike the case of symplectizations, here a fast plane need not be immersed or somewhere injective; the proof of Lemma~\ref{lemma_fast_symplectisations} does not carry over because $\operatorname{wind}_\pi$ does not make sense in this context. As before, we define ${\rm cov}(\util)$ to be the covering multiplicity of the asymptotic limit. If ${\rm cov}(\util)=1$ then $\util$ is somewhere injective. 


\subsection{The main compactness argument}

We now prove Lemma~\ref{lemma_hard_step}. Before diving into the argument, let us sketch the main steps. 

Firstly, we use the lemma holds under the assumptions that $\varphi(\Omega_1) \subset {\rm int}(\Omega_2)$ and $\Omega_2$ is nondegenerate up to action~$C$, to show the version of the lemma when these assumptions are dropped. This is Step~0. The remaining argument is to show the lemma when $\varphi(\Omega_1) \subset {\rm int}(\Omega_2)$ and $\Omega_2$ is nondegenerate up to action~$C$. The idea is: (1)~consider the symplectic cobordism from $\partial\Omega_2$ to $\varphi(\partial\Omega_1)$ induced by $\Omega_2$, which has a convex boundary at $\partial\Omega_2$ and a concave boundary at $\varphi(\partial\Omega_1)$, (2)~use existence results for fast planes from~\cite{fast,hryn_jsg} to see that some moduli space of fast planes in this cobordism is nonempty, (3)~``push'' the fast planes in this moduli space ``inside'' the concave boundary $\varphi(\partial\Omega_1)$, (4)~show that this family of planes SFT-converges to cylindrical building consisting of a cylinder in the cobordism, and a fast plane in the symplectization of $\varphi(\partial\Omega_1)$ which is asymptotic to the desired Hopf orbit in~$\varphi(\partial\Omega_1)$.

\subsubsection*{Step~0}
First assume that Lemma~\ref{lemma_hard_step} holds under the additional assumptions that $\varphi(\Omega_1) \subset {\rm int}(\Omega_2)$ and that $\Omega_2$ is nondegenerate up to action~$C$. In this case the lemma provides the orbit $\gamma \in \P_{\rm Hopf}(\partial\Omega_1)$ with a bound $\action(\gamma) < \action_{\rm Hopf}(\Omega_2)$. Using this, we will now prove the version of Lemma~\ref{lemma_hard_step} where these extra assumptions are dropped.

Consider a sequence $\gamma_n \in \P_{\rm Hopf}(\partial\Omega_2)$ satisfying $\action(\gamma_n) \to \action_{\rm Hopf}(\Omega_2)$. Choose $\epsilon_n>0$, $\epsilon_n \to 0$ an arbitrary sequence. We follow a scheme similar to Step~4 in the proof of Theorem~\ref{main_fast_capacity}. Note that $\tilde\gamma_n = e^{\epsilon_n}\gamma_n$ is a Hopf orbit in~$e^{\epsilon_n}\partial \Omega_2$. Let $\Omega^n_2$ be a small $C^\infty$-perturbation of $e^{\epsilon_n}\Omega_2$ such that:
\begin{itemize}
\item $\Omega^n_2$ is nondegenerate and dynamically convex up to action~$C$.
\item $\tilde\gamma_n \subset \partial\Omega^n_2$.
\item $\partial \Omega^n_2 \to \partial \Omega_2$ in $C^\infty$ as hypersurfaces.
\item $\Omega_2 \subset {\rm int}(\Omega^n_2)$.
\end{itemize}
In particular, $\tilde\gamma_n \in \P_{\rm Hopf}(\partial\Omega^n_2)$. Choose $\delta_n \in (0,\epsilon_n)$ such that $\Omega_2 \subset {\rm int}(e^{-\delta_n}\Omega^n_2)$. Note that $e^{-\delta_n}\tilde\gamma_n = e^{\epsilon_n-\delta_n}\gamma_n$ belongs to $\P_{\rm Hopf}(e^{-\delta_n}\partial\Omega_2^n)$.
By the assumed version of Lemma~\ref{lemma_hard_step}, there exist periodic orbits $\hat\gamma_n \in \P_{\rm Hopf}(\partial\Omega_1)$ satisfying
\[
\action(\hat\gamma_n) < \action_{\rm Hopf}(e^{-\delta_n}\Omega^n_2) \leq e^{\epsilon_n-\delta_n}\action(\gamma_n) \to \action_{\rm Hopf}(\Omega_2), \qquad {\rm CZ}(\hat\gamma_n) = 3.
\]
A direct application of Lemma~\ref{lemma_easy_step} to the sequence $\hat\gamma_n$ in $\Omega_1$ establishes the existence of $\gamma \in \P_{\rm Hopf}(\Omega_1)$ satisfying $\action(\gamma) \leq \action_{\rm Hopf}(\Omega_2)$ and $\CZ(\gamma)=3$. The proof of Step~0 is complete.

\bigskip

With Step~0 established, we now proceed with the proof of Lemma~\ref{lemma_hard_step} under the additional assumptions that $\Omega_2$ is nondegenerate up to action~$C > \action_{\rm Hopf}(\Omega_2)$, and that $\varphi(\Omega_1) \subset {\rm int}(\Omega_2)$. In this case we wish to find $\gamma \in \P_{\rm Hopf}(\Omega_1)$ satisfying $\CZ(\gamma)=3$ and $\action(\gamma) < \action_{\rm Hopf}(\Omega_2)$.

\subsubsection*{Step 1}
We begin with some geometric setup.

Consider the compact manifold
\[
W = \Omega_2 \setminus \varphi(\Omega_1\setminus\partial\Omega_1),
\]
with symplectic form $\omega=\omega_0$. We have
\[
\partial W = Y^+ - Y^-
\]
where $Y^+ = \partial\Omega_2$ and $Y^- = \varphi(\partial\Omega_1)$. The radial vector field on $\Omega_2$ is Liouville, transverse and outward pointing at $Y^+$. The pushforward of the radial vector field on $\Omega_1$ by $\varphi$ extends to a Liouville vector field defined on a neighbourhood of $Y^-$ in $W$, where it is transverse to $Y^-$ and inward pointing. Hence, we can define the symplectic completion $(\W,\overline{\omega})$ of $(W,\omega)$ as in~\S~\ref{ssec_curves_cobodisms}. As usual, let $\lambda$ denote the contact form on $Y^+ \cup Y^-$. By assumption, $\lambda$ is, up to action $C$, dynamically convex and nondegenerate. Write $\xi = \ker\lambda$. Fix a global $d\lambda$-symplectic trivialization~$\tau$ of~$\xi$. 

Choose any complex structure $J$ on $\xi$ which is $d\lambda$-compatible, and let $\jtil$ be the corresponding $\lambda$-compatible almost complex structure on $\R\times Y$ as in equation \eqref{R_inv_J}. Let $\jbar$ be an almost complex structure on $\W$ which is $\omega$-compatible on $W$ and agrees with $\jtil$ on $(-\infty,0]\times Y^- \cup [0,+\infty) \times Y^+$. In particular, $\jbar$ is $\overline{\omega}$-compatible. We need to choose the extension $\jbar$ generically, in a way to be specified in Step 4 below.

\subsubsection*{Step 2}
Let $\gamma_+$ be a simple $\lambda$-Reeb orbit in~$Y^+ = \partial\Omega_2$ with period $T\leq C$. Let $\Mfast(\gamma_+,\jbar)$ denote the moduli space of equivalence classes of fast and embedded finite energy planes $\util:(\C,i) \to (\W,\jbar)$ asymptotic to $\gamma_+$. Two planes $\util_0,\util_1$ are declared equivalent if there exist $(A,B) \in\C^*\times\C$ such that $\util_1(Az+B) = \util_0(z)$ for all $z\in\C$. We now show that $\Mfast(\gamma_+,\jbar)$ is naturally endowed with the structure of a two-dimensional smooth manifold.

Fix $\delta<0$ in the interval between eigenvalues of the asymptotic operator $L_{\gamma_+,\jtil}$ satisfying $\wind_\tau=1$ and $\wind_\tau=2$. This is possible since it follows from dynamical convexity that $\CZ^0_\tau(\gamma_+) \geq 3$. Note that $\alpha^{<\delta}_\tau(\gamma_+)=1$ and $\CZ^\delta_\tau(\gamma_+) = 3$. Following \cite{props3}, one can build a Fredholm theory with weight~$\delta$ based on sections of the normal bundle. In more detail, let $\util = (a,u)$ represent an arbitrary element of $\Mfast(\gamma_+,\jbar)$. Using the asymptotic behavior from Theorem~\ref{thm_asymptotics} and the fact that $\util$ is assumed to be an embedding (an immersion would suffice), one can consider a $\jbar$-invariant normal bundle $N$ of $\util$ that agrees with $u^*\xi$ near the puncture $\infty$. Then one looks at sections of the normal bundle that decay faster then $e^{\delta s}$ near $\infty$, where a neighbourhood of $\infty$ is parametrised by $(s,t) \simeq e^{2\pi(s+it)}$, $s\gg1$. One can use here Sobolev or H\"older spaces. The linearization of the Cauchy-Riemann equations at $\util$ determines a Fredholm operator $D_{\util}$ on this space of sections with index
\begin{equation*}
{\rm ind}_\delta(\util) = \CZ^\delta_\tau(\gamma_+) - 1 = 3-1 = 2.
\end{equation*}

\begin{lemma}
\label{lemma_autom_transv}
The operator $D_{\util}$ is surjective.
\end{lemma}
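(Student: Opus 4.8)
The plan is to prove that the Fredholm operator $D_{\util}$ is surjective by an automatic transversality argument specific to dimension four. The key observation is that $D_{\util}$ has Fredholm index $2$, and I want to show that its cokernel vanishes. The standard tool here is the automatic transversality criterion of Hofer–Wysocki–Zehnder and Wendl: for a somewhere-injective (here embedded) finite-energy curve in a four-dimensional symplectization or cobordism, the linearized Cauchy–Riemann operator is surjective provided a numerical inequality relating the Fredholm index, the Euler characteristic of the domain, and the number of ends with a certain parity condition is satisfied. Concretely, one invokes the inequality
\begin{equation*}
{\rm ind}_\delta(\util) > 2g(\util) - 2 + \#\Gamma_{\rm even} + \#\Gamma,
\end{equation*}
where $g(\util)=0$ is the genus of the plane, $\#\Gamma = 1$ counts the single puncture, and $\#\Gamma_{\rm even}$ counts punctures at which the relevant constrained Conley–Zehnder parity is even.

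The steps I would carry out are as follows. First I would set up the normal bundle $N$ of the embedded plane $\util$, agreeing with $u^*\xi$ near the puncture, and identify $D_{\util}$ with the associated normal Cauchy–Riemann operator acting on sections in the weighted space with weight $\delta$; this is essentially already done in the text preceding the lemma. Second, I would recall Wendl's automatic transversality theorem (from \cite{props3} or the appropriate reference) in the form applicable to punctured curves with exponential weights, where the Conley–Zehnder indices entering the count are the \emph{constrained} indices $\CZ^\delta_\tau$ adapted to the chosen weight $\delta$. Third, I would verify the numerical hypothesis: here $g=0$, there is exactly one end, and the constraint forces $\CZ^\delta_\tau(\gamma_+)=3$, which is odd, so $\#\Gamma_{\rm even}=0$; thus the right-hand side is $2\cdot 0 - 2 + 0 + 1 = -1$, while ${\rm ind}_\delta(\util)=2 > -1$, so the criterion applies and $D_{\util}$ is surjective.

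The main subtlety — and the step I expect to require the most care — is ensuring that the automatic transversality theorem is invoked in exactly the right normal-bundle and weighted-Sobolev formulation, so that the constrained Conley–Zehnder index $\CZ^\delta_\tau(\gamma_+)=3$ (rather than the unconstrained $\CZ^0_\tau$) is the quantity controlling the parity count $\#\Gamma_{\rm even}$. The whole point of fixing $\delta$ strictly between the winding-number-$1$ and winding-number-$2$ eigenvalues is that $\alpha^{<\delta}_\tau(\gamma_+)=1$ and $p^\delta(\gamma_+)=1$, giving $\CZ^\delta_\tau(\gamma_+)=3$ with the correct odd parity; I would make explicit that this is precisely the hypothesis needed. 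A secondary point is that automatic transversality as stated applies to somewhere-injective curves, which is guaranteed here because $\util$ is embedded. Once these structural matching-up points are in place, the conclusion is immediate from the index inequality, and no further analytic estimates are needed.
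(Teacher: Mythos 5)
Your proposal reaches the correct conclusion but takes a genuinely different route from the paper. You invoke an automatic-transversality criterion as a black box and verify its numerical hypothesis, whereas the paper proves surjectivity of $D_{\util}$ directly: it shows that the trivialization $\tau_N$ extending over the normal bundle is ``one turn ahead'' of the global trivialization $\tau$, so that $\alpha^{<\delta}_{\tau_N}(\gamma_+)=\alpha^{<\delta}_{\tau}(\gamma_+)-1=0$; then any nonzero $\zeta\in\ker D_{\util}$ satisfies a Cauchy--Riemann type equation and decays faster than $e^{\delta s}$, so its asymptotic winding is $\le \alpha^{<\delta}_{\tau_N}(\gamma_+)=0$, and by positivity of zeros $\zeta$ is nowhere vanishing; finally, if $D_{\util}$ were not surjective the kernel would have dimension $\ge 3$ (the index being $2$), and three linearly independent nowhere-vanishing sections of a rank-two bundle admit a nontrivial linear combination vanishing at a point --- a contradiction. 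This winding argument is precisely the proof mechanism underlying the criterion you cite, specialized to the weighted problem at hand. What your approach buys is brevity and generality; what it costs is exactly the point you flag as the main subtlety, namely matching the exponential weight $\delta$ and the constrained index $\CZ^\delta_\tau$ to the hypotheses of the cited theorem, a formulation-matching issue that the paper's self-contained argument avoids entirely (and which is why the paper spells out the winding computation rather than citing a reference).

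Two corrections to your write-up. First, the inequality you state is not the standard form of the criterion: for immersed punctured curves in dimension four it reads $\ind_\delta(\util) > 2g - 2 + \#\Gamma_{\rm even}$, with no additional $+\#\Gamma$ term. Your version is more demanding than the true hypothesis, and since $2 > -1$ it is still satisfied, so your application is valid, but the inequality should be quoted correctly; moreover \cite{props3} does not contain it in this form --- Wendl's automatic transversality paper is the appropriate reference, and it does accommodate the constrained/weighted setting you need. Second, your parity verification is indeed the crux and is correct: since $\delta$ lies between the eigenvalues of winding $1$ and $2$, one has $\CZ^{\delta}_\tau(\gamma_+) = 2\alpha^{<\delta}_\tau(\gamma_+) + p^\delta(\gamma_+) = 3$, which is odd, so $\#\Gamma_{\rm even}=0$.
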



\begin{proof}
The contact structure along the plane can be identified, near the puncture, with the normal bundle of the plane. This follows from the asymptotic formula. We proceed assuming this identification is made. Consider $x+iy$ global coordinates on $\C$, and $(s,t) \in \mathbb{R} \times \mathbb{R}/\mathbb{Z}$ coordinates on $\C \setminus \{0\}$ defined via the biholomorphism $(s,t) \simeq e^{2\pi(s+it)}$. A symplectic trivialization of the normal bundle~$N$ can be homotoped to a trivialization that extends over a $d\lambda$-symplectic trivialization $\tau_N$ of $x(T\cdot)^*\xi$.  A nowhere vanishing section $\zeta_N$ of $x(T\cdot)^*\xi$ such that $\wind_{\tau_N}(\zeta_N)=0$ extends to a nowhere vanishing section of~$N$ still denoted by $\zeta_N$. Hence, $\{d\util \cdot \partial_x,\zeta_N\}$ is a complex frame of $\util^*TW$. A nowhere vanishing section $\zeta_1$ of $x(T\cdot)^*\xi$ such that $\wind_{\tau}(\zeta_1)=0$ extends to a nowhere vanishing section of~$\util^*\xi$ still denoted by $\zeta_1$. Hence, $\{\partial_a,\zeta_1\}$ is a complex frame of $\util^*TW$. The asymptotic behavior tells us that $\partial_a$ is roughly $d\util \cdot \partial_s$, up to a positive constant, near the puncture. Since~$\C$ is contractible, $$ 0 = \wind(d\util \cdot \partial_s,d\util \cdot \partial_x) + \wind(\zeta_1,\zeta_N) = 1 + \wind(\zeta_1,\zeta_N). $$ This shows that $\tau_N$ is ``one turn ahead'' of $\tau$. Hence, a nowhere vanishing section of $x(T\cdot)^*\xi$ with winding number equal to $0$ with respect to $\tau_N$ will have winding number equal to $+1$ with respect to~$\tau$. In particular,
\[
\alpha^{<\delta}_{\tau_N}(\gamma_+) = \alpha^{<\delta}_{\tau}(\gamma_+) - 1 = 1-1=0.
\]
A section $\zeta$ in the kernel of $D_{\util}$ satisfies a Cauchy-Riemann type equation. Hence, either $\zeta \equiv 0$, or its zeros are isolated and count positively to the total algebraic count of zeros. Moreover, if $\zeta \not \equiv 0$ then its asymptotic behavior at $\infty$ is governed by an eigenvector $e(t)$ of the asymptotic operator $L_{\gamma_+,\jtil}$ associated to an eigenvalue $\nu<0$ in a similar manner to what is described by Theorem~\ref{thm_asymptotics}: $\zeta(s,t) \sim e^{\nu s}(e(t)+\varepsilon(s,t))$, $\sup_t|\varepsilon(s,t)| \to 0$ as $s\to+\infty$. One can justify this asymptotic behavior using~\cite[Theorem~A.1]{sie_CPAM}. Hence $\zeta$ has only finitely many zeros, and by the argument principle the total count of zeros of~$\zeta$ is nonnegative and equal to $\wind_{\tau_N}(\nu)$. The exponential decay $e^{\delta s}|\zeta(s,t)| \to 0$ forces $\nu<\delta$. Thus $\wind_{\tau_N}(\nu) \leq \alpha^{<\delta}_{\tau_N}(\gamma_+) = 0$, and we conclude that~$\zeta$ never vanishes if it does not vanish identically. If $D_{\util}$ is not surjective then its kernel has dimension $\geq 3$ since the index is $2$, so we would find three linearly independent sections in the kernel, but this is impossible: a linear combination of them would vanish at some point because the normal bundle has rank two. This proves the desired surjectivity.
\end{proof}

The automatic transversality proved above endows $\Mfast(\gamma_+,\jbar)$ with the structure of a two-dimensional manifold. Note that since the Fredholm theory is modelled on a space of sections with exponential decay faster than $\delta$, the asymptotic eigenvalue of the obtained nearby planes is always $<\delta$, implying that $\wind_\infty \leq \alpha^{<\delta}_\tau = 1$ for the nearby planes.

In preparation for compactness arguments below, we note that one can use results on cylinders of small area, such as those proven in~\cite{small_area}, together with the assumed nondegeneracy of $\gamma_+$, to conclude that the topology $\Mfast(\gamma_+,\jbar)$ inherits from the functional analytic set-up agrees with the topology induced by $C^\infty_{\rm loc}$-convergence. This is the property of ``completeness'' in \cite{props3}.

\subsubsection*{Step 3}
Suppose now that $\util$ is an embedded fast plane in $\overline{W}$ asymptotic to $\gamma_+$. Let $\mathcal{Y}\subset \Mfast(\gamma,\jbar)$ denote the connected component of $[\util]$.
 For each $L\geq0$ consider the compact subset of $\W$ defined by
\[
E_L = \left([-L,0]\times Y^-\right)  \sqcup  W  \sqcup  \left([0,L]\times Y^+\right).
\]
Define
\[
\mathcal{Y}_L = \{ [\vtil] \in \mathcal{Y} \mid \vtil(\C) \cap E_L \neq \emptyset, \ \vtil(\C) \cap \left((-\infty,-L) \times Y^-\right) = \emptyset \}.
\]
The goal of this step is to show that $\mathcal{Y}_L$ is compact. 

Let $[\vtil_n] \in \mathcal{Y}_L$ be an arbitrary sequence. By the SFT compactness theorem \cite{sft_comp} we find a subsequence, still denoted by $\vtil_n$, that SFT converges to a holomorphic building ${\bf u}$. The bottom level of the building is a (possibly nodal) curve in $(\W,\jbar)$, but in principle there might be more levels in $(\R\times Y^+,\jtil)$. Any level in $\R\times Y^+$ must have at least one irreducible component which is not a trivial cylinder $\R\times\gamma$ where $\gamma$ is a Reeb orbit.

We first prove that~${\bf u}$ has only the bottom level. Suppose by contradiction that~${\bf u}$ has levels in $\R\times Y^+$. Then the top level is a finite energy punctured sphere
\begin{equation*}
\util_+ = (a_+,u_+) : (\C\setminus\Gamma,i) \longrightarrow (\R\times Y^+,\jtil)
\end{equation*}
with precisely one positive puncture (at $\infty$) where it is asymptotic to $\gamma_+$, and negative punctures at $\Gamma$. If $\int u_+^*d\lambda=0$, then $\util_+$ is the trivial cylinder over~$\gamma_+$ since~$\gamma_+$ is simply covered, contradicting the nontriviality in the previous paragraph. Hence $\int u_+^*d\lambda>0$ and $\util_+$ has nontrivial asymptotic behavior at all its punctures.

We now claim that
\begin{equation}
\label{wind_infty_top_level_end}
\wind_\infty(\util_+,\infty,\tau) \leq 1.
\end{equation}
To prove \eqref{wind_infty_top_level_end}, SFT compactness provides constants $A_n\in\C^*$, $B_n\in\C$ such that the sequence of maps $\wtil_n(z) = \vtil_n(A_nz+B_n)$ has the following properties: 

\begin{itemize}
\item[(i)] For every open neighborhood $U$ of $\Gamma$ in $\C$ and every $A\geq 0$ there exists $n_{U,A}$ such that if $n\geq n_{U,A}$ then $\wtil_n(\C\setminus U) \subset [A,+\infty)\times V^+$.
\item[(ii)] By (i) we can write $\wtil_n=(d_n,w_n)$ on arbitrary compact subsets of $\C\setminus\Gamma$ provided $n$ large enough. Then $w_n \to u_+$ in $C^\infty_{\rm loc}(\C\setminus\Gamma)$.
\end{itemize}

Note that there is a uniformity in neighbourhoods of $\infty$ encoded in property~(i). Write $(s,t)$ instead of $e^{2\pi(s+it)}$. Theorem~\ref{thm_asymptotics} provides a constant $s_0>0$ such that $\pi_\lambda(\partial_su_+)$ does not vanish on $[s_0,+\infty)\times\R/\Z$. In particular the winding number $\wind_\tau(\pi_\lambda(\partial_su_+(s_0,\cdot)))$ of $t\mapsto \pi_\lambda(\partial_su_+(s_0,t))$ in the global frame $\tau$ is equal to $\wind_\infty(\util,\infty,\tau)$. By (i) we can assume, up to making $s_0$ larger, that $\pi_\lambda(\partial_sw_n)$ is defined on the set $[s_0,+\infty)\times\R/\Z$. Use $\tau$ to represent $\pi_\lambda(\partial_sw_n)$ as a $\C$-valued function $\zeta_n(s,t)$. This function satisfies a Cauchy-Riemann type equation, hence its zeros count positively towards the total algebraic count of zeros. Since $\pi_\lambda(\partial_sw_n(s_0,\cdot)) \to \pi_\lambda(\partial_su_+(s_0,\cdot))$ in $C^\infty(\R/\Z)$ and the latter does not vanish, we get that $\zeta_n(s_0,\cdot)$ does not vanish and $\wind(\zeta_n(s_0,\cdot)) = \wind_\infty(\util_+,\infty,\tau)$ for all $n$ large enough. The argument principle now implies that
\[
1\geq \wind_\infty(\vtil_n) = \lim_{s\to+\infty} \wind(\zeta_n(s,\cdot)) \geq \wind(\zeta_n(s_0,\cdot)) = \wind_\infty(\util_+,\infty,\tau)
\]
for all $n\gg1$, as desired.

With \eqref{wind_infty_top_level_end} established, we now invoke the dynamical convexity of $\lambda$ up to action~$C$. Namely, at each negative puncture $z\in\Gamma$ we have $\wind_\infty(\util,z,\tau) \geq 2$ since the corresponding asymptotic limits satisfy $\CZ^0_\tau\geq 3$. Hence 
\[
0 \leq \wind_\pi(\util_+) = \wind_\infty(\util_+) -1 + \#\Gamma \leq 1 - 2\#\Gamma - 1 + \#\Gamma = -\#\Gamma,
\]
so $\Gamma=\emptyset$. This contradiction concludes the proof of the claim that ${\bf u}$ has only one level.

The bottom level of ${\bf u\/}$ cannot contain any closed irreducible components, because the symplectic form $\overline{\omega}$ on $\overline{W}$ is exact. Thus we have shown that a sequence $\vtil_n$ representing a sequence in $\mathcal{Y}_L$ has $C^\infty_{\rm loc}$-convergent subsequence, up to reparametrisations, to a finite-energy plane~$\vtil$ asymptotic to $\gamma_+$.

Denoting again by $\vtil_n$ the convergent subsequence, the SFT compactness provides information analogous to (i) and (ii):
\begin{itemize}
\item[(iii)] There exists some open bounded set $U \subset\C$ and some $N$ such that if $n\geq N$ then $\vtil_n(\C\setminus U) \subset [0,+\infty)\times Y^+$.
\end{itemize}
Now the exact same argument as above will show that $\wind_\infty(\vtil)\leq 1$, i.e. $\vtil$ is fast. Since $\gamma$ is simply covered we know that $\vtil$ is somewhere injective. An isolated self-intersection of~$\vtil$ counts strictly positively to the total self-intersection number, forcing self-intersections of $\vtil_n$ for $n$ large enough, a contradiction. Non-isolated self-intersections would force $\vtil$ to be multiply covered, a contradiction. A direct application of~\cite[Theorem~7.2]{MW} implies that $\vtil$ does not have critical points: critical points of~$\vtil$ are isolated, and~$\vtil$ is a pseudo-holomorphic map that is a $C^\infty_{\rm loc}$-limit of pseudo-holomorphic maps~$\vtil_n$. Hence,~$\vtil$ is an embedding.

In fact,  This is a contradiction. Non-isolated self-intersections would force $\vtil$ to be multiply covered, contradicting the fact that $\gamma_+$ is a simple orbit.

 Similarly, a critical point of $\vtil$ would force that $\vtil_n$ is not an embedding for $n$ large enough, a contradiction. This concludes the proof that $\mathcal{Y}_L$ is compact.

\subsubsection*{Step 4}
We now produce the Reeb orbit $\gamma$ claimed by the Lemma~\ref{lemma_hard_step}. From now on~$\gamma_+$ is assumed to be a Hopf orbit with action $\mathcal{A}_{\rm Hopf}(\Omega_2)$. Existence of $\gamma_+$ follows from the existence of Hopf orbits in $Y^+$ together with an application of Lemma~\ref{lemma_easy_step}. 

The moduli space~$\Mfast(\gamma_+,\jbar)$ is nonempty. To prove this it suffices to show that there exists an embedded fast finite-energy plane in the symplectization $\R \times Y^+$ asymptotic to~$\gamma_+$. This is proved in~\cite[Lemma~3.14]{hryn_jsg}.

The Fredholm theory from Step 2 provides families of planes in $\Mfast(\gamma_+,\jbar)$ by exponentiating nonvanishing sections of the normal bundle. Hence the set of points in $\W$ contained in the image of some plane in $\mathcal{Y}$ is open. 
This fact and the compactness of $\mathcal{Y}_L$ proved above together show that 
we can find a sequence of planes $[\vtil_n] \in\mathcal{Y}$ satisfying
\begin{equation}
\forall L\geq 0 \ \exists n_L \ : \ n\geq n_L \Longrightarrow \vtil_n(\C) \cap \left((-\infty,-L] \times Y^- \right) \neq \emptyset.
\end{equation}
By SFT compactness again, we can pass to a subsequence so that $[\vtil_n]$ converges to a building ${\bf u}$. Unlike in Step 3, ${\bf u}$ now has at least one lower level in $(\R\times Y^-,\jtil)$, in addition to a unique level in $(\W,\jbar)$, and possibly some upper levels in $(\R\times Y^+,\lambda)$. 

The same argument as in Step 3, using (i) and (ii), shows that there are no upper levels in $(\R\times Y^+,\lambda)$.  It follows that the level in $(\W,\jbar)$ consists precisely of a finite energy sphere $\util_0:(\C\setminus\Gamma_0,i) \to (\W,\jbar)$ with precisely one positive puncture (at $\infty$) where the asymptotic limit is $\gamma_+$. If the asymptotic behavior at the positive puncture is trivial then $\wind_\infty(\util_0,\tau,\infty) = -\infty$. If the  asymptotic behavior at $\infty$ is nontrivial trivial then, up to reparametrisation, the SFT compactness theorem provides property (iii), and one can argue similarly to Step 3 to prove that $\wind_\infty(\util_0,\tau,\infty) \leq 1$. 

We claim now that $\util_0$ is an embedded cylinder, asymptotic at its negative puncture to a Reeb orbit $\gamma_-$ satisfying $\CZ(\gamma_-)=3$. To see this, first note that $\util_0$ is somewhere injective because the asymptotic limit at the unique positive puncture $\infty$ is the simple Reeb orbit $\gamma_+$. Since it is a limit of embeddings, the results from \cite{MW} imply that $\util_0$ is also an embedding. Now one builds a weighted Fredholm theory with sections of the normal bundle for embedded curves in $(\W,\jbar)$ with the same asymptotic behavior as $\util_0$. There is only the nontrivial weight $\delta<0$ at the positive end where the asymptotic limit is $\gamma_+$. The weighted Fredholm index is 
\begin{equation*}
\CZ^\delta_\tau(\gamma_+) - \sum_{z\in\Gamma_0} \CZ^0_\tau(\gamma_z) - 1 + \#\Gamma_0
\end{equation*}
Here $\gamma_z$ denotes the asymptotic limit of $\util_0$ at the negative punctures $z\in\Gamma_0$. Standard arguments imply that if $\jbar$ is chosen in a residual subset of almost complex structures, then this weighted Fredholm index is nonnegative. Assume that $\jbar$ has been chosen this way in Step 1. We know that $\CZ^\delta_\tau(\gamma)=3$ and $\CZ^0_\tau(\gamma_z)\geq 3$ by dynamical convexity. Plugging this in to the above formula for the weighted Fredholm index, we obtain
\begin{equation}
0 \leq 3 - 3\#\Gamma_0 -1 + \#\Gamma_0 = 2(1-\#\Gamma_0).
\end{equation}
It follows that $\#\Gamma_0=1$ and the above inequalities are equalities. Thus $\util_0$ is an embedded cylinder asymptotic at its negative puncture to a Reeb orbit $\gamma_-$ satisfying $\CZ(\gamma_-)=3$.

\subsubsection*{Step 5}
We claim now that $\gamma_-$ fulfills the requirements of Lemma~\ref{lemma_hard_step}. We have
\[
\mathcal{A}(\gamma_-) < \mathcal{A}(\gamma_+) \le \mathcal{A}_{\operatorname{Hopf}}(\Omega_2)
\]
by Stokes theorem since $(W,\omega)$ is exact. So we just need to show that $\gamma_-$ is a Hopf orbit.

The argument is similar to before to conclude that there is exactly one lower level which is a fast embedded finite energy plane in $(\R\times Y^-,\jtil)$ asymptotic to $\gamma_-$. Let us provide the details. The level of ${\bf u}$ immediately below the level represented by~$\util_0$ has to be represented by a unique smooth  finite energy sphere $\util_- : (\C\setminus\Gamma_-,i) \to (\R\times Y^-,\jtil)$ with precisely one positive puncture ($\infty$) where it is asymptotic to $\gamma_-$. Note that the Reeb orbit $\gamma_-$ is simple; it follows from dynamical convexity that any iterated orbit satsifies $\CZ^0_\tau \geq 5$. Hence nontriviality of ${\bf u}$ implies that $\int u_-^*d\lambda>0$, i.e. $\util_-$ has nontrivial asymptotic behavior at all its punctures. Then $\CZ^0_\tau(\gamma_-)=3$ implies $\wind_\infty(\util_-,\infty,\tau) \leq 1$. Dynamical convexity implies that $\wind_\infty(\util_-,z,\tau) \geq 2$ at all negative punctures $z\in\Gamma_-$. We can estimate
\begin{equation}
0 \leq \wind_\pi(\util_-) = \wind_\infty(\util_-) - 1 + \#\Gamma_- \leq 1 - 2\#\Gamma_- - 1 + \#\Gamma_- = -\#\Gamma_-
\end{equation}
implying that $\Gamma_-=\emptyset$ and $\util_-$ is a plane. It is automatically fast: $\wind_\infty(\util_-) = \wind_\infty(\util_-,\infty,\tau) \leq 1$. It remains to show that $\util_-$ is embedded, but this follows from the fact that it is a somewhere injective limit of embeddings.

In \cite{fast,hryn_jsg} it is proved\footnote{
In the light of results from \cite{sief_int} this can now be explained in more modern language.  If the embedded fast plane $\util=(a,u)$ in $(\R\times \partial W,\jtil)$ has as asymptotic limit a simply covered periodic orbit $\hat P$ then one can define a weighted self-intersection number $\util *_\delta \util = {\rm int}(\util,\util_\tau) + \alpha^{<\delta}_\tau(\hat P)$. Here $\tau$ is any $d\lambda$-symplectic trivialization of $\xi$ along $\hat P$, and $\util_\tau=(a,u_\tau)$ where $u_\tau$ is the map $u$ pushed in the direction of $\tau$ near the puncture. Arguing as in~\cite{sief_int}, it follows that $\util *_\delta \util\geq 0$ and that if $\util *_\delta \util = 0$ then $u$ is an embedding into $\partial W \setminus \hat P$.
Plugging $\tau=\tau_N$, where~$\tau_N$ is the trivialization that extends over the normal bundle, we get ${\rm int}(\util,\util_{\tau_N})=0$ and $\alpha^{<\delta}_\tau(\hat P) = \alpha^{<\delta}_\tau(\hat P)-1 = 1 - 1 = 0$. Hence $\util *_\delta \util = 0$ and $\hat P$ is unknotted. 
Since $u$ is transverse to the Reeb vector field, the self-linking number of $\hat P$ is $-1$.
}
that a simple Reeb orbit that is the asymptotic limit of an embedded fast finite energy plane is unknotted and has self-linking number $-1$. Thus $\gamma_-$ is a Hopf orbit. Setting $\gamma = \gamma_+$, the proof of Lemma~\ref{lemma_hard_step} is completed.


\section{The first ECH capacity from a Hopf orbit}

We now recall the definition of $c_1^{\rm ECH}$ and prove Theorem~\ref{thm:Hopf_ECH}.

\subsection{Embedded contact homology}
\label{sec:ech}

To begin, let $Y$ be a closed three-manifold and let $\lambda$ be a nondegenerate contact form on $Y$. We now review the definition of embedded contact homology $\ECH(Y,\lambda)$. For simplicity we further assume that $Y$ is a homology sphere, which is sufficient for our purposes; for more details and the case of general three-manifolds see e.g.\ \cite{ind_rev,Hutchings_lectures}.

\begin{definition}
An {\bf ECH generator} is a finite set of pairs $\alpha=\{(\alpha_i,m_i)\}$ where the $\alpha_i$ are distinct simple Reeb orbits, the $m_i$ are positive integers, and $m_i=1$ whenever $\alpha_i$ is hyperbolic\footnote{A nondegenerate Reeb orbit is {\bf hyperbolic\/} if the linearized return map \eqref{eqn:lrm} has real eigenvalues.}. We sometimes denote an ECH generator as a product $\alpha=\prod_i\alpha_i^{m_i}$.
\end{definition}

\begin{definition}
If $\alpha=\{(\alpha_i,m_i)\}$ is an ECH generator, its {\bf symplectic action\/} is
\[
\mathcal{A}(\alpha) = \sum_i m_i \mathcal{A}(\alpha_i).
\]
\end{definition}

\begin{definition}
If $\alpha=\alpha_1^{m_1}\cdots\alpha_k^{m_k}$ is an ECH generator, its {\bf ECH index\/} is the integer $I(\alpha)$ defined by\footnote{To compare with the notation in other papers: $I(\alpha)$ here corresponds to $I(\alpha,\emptyset)$ in \cite[Def.\ 2.15]{ind_rev}, and to $I(\alpha,\emptyset,Z)$ in \cite[Eq.\ (3.4)]{Hutchings_lectures}, where $Z$ is the unique relative homology class of surface in $Y$ with boundary $\alpha$.}
\begin{equation}
\label{eqn:Ialpha}
I(\alpha) = \sum_i\left(m_ic_\tau(\alpha_i) + m_i^2Q_\tau(\alpha_i)\right) + \sum_{i\neq j}m_im_j\operatorname{link}(\alpha_i,\alpha_j) + \sum_i\sum_{l=1}^{m_i}\CZ_\tau(\alpha_i^l).
\end{equation}
Here $\tau$ is any trivialization of $\xi$ over the Reeb orbits $\alpha_i$; the ECH index $I(\alpha)$ does not depend on this choice, although the terms on the right hand side of \eqref{eqn:Ialpha} do. If $\gamma$ is a simple Reeb orbit, then $c_\tau(\gamma)$ denotes the relative first Chern class of $\xi$ with respect to $\tau$ over a surface bounded by $\gamma$, and $Q_\tau(\gamma)$ denotes the relative self-intersection number with respect to $\tau$ of a surface bounded by $\gamma$; see \cite{ind_rev,Hutchings_lectures} for details. Also, $\operatorname{link}$ denotes the linking number, and $\gamma^l$ denotes the Reeb orbit that covers $\gamma$ with multiplicity $l$.
\end{definition}

\begin{remark}
In our situation it is useful to rewrite the index formula \eqref{eqn:Ialpha} as follows. First, it follows from the definitions that if $\gamma$ is a simple Reeb orbit, then for any trivialization $\tau$ of $\xi$ over $\gamma$, the self-linking number of $\gamma$ satisfies
\begin{equation}
\label{eqn:slqc}
\operatorname{sl}(\gamma) = Q_\tau(\gamma) - c_\tau(\gamma).
\end{equation}
Second, if we choose $\tau$ to be a global trivialization of $\xi$ (which we can do, uniquely up to homotopy, since $Y$ is a homology sphere), then $c_\tau(\gamma)=0$ for every Reeb orbit~$\gamma$. Consequently, we can rewrite equation \eqref{eqn:Ialpha} as
\begin{equation}
\label{absolute_grading}
I(\alpha) = \sum_i m_i^2 \sl(\alpha_i) + \sum_{i\neq j} m_im_j\operatorname{link}(\alpha_i,\alpha_j) +  \sum_i \sum_{l=1}^{m_k} \CZ(\alpha_i^l).
\end{equation}
Here $\CZ$ stands for the Conley-Zehnder index taken with respect to~$\tau$.
\end{remark}

\begin{definition}
Define $\operatorname{ECC}(Y,\lambda)$ to be the vector space\footnote{It is also possible to define ECH with $\Z$ coefficients, but we do not need this.} over $\Z/2$ freely generated by the ECH generators, with $\Z$ grading defined by \eqref{absolute_grading}.
\end{definition}

Next, choose a generic $d\lambda$-compatible almost complex structure on $\xi$, and let $\jtil$ denote the corresponding almost complex structure on $\R\times Y$ as in \S\ref{ssec_pseudo_hol_curves}. If $\alpha$ and $\beta$ are ECH generators, define $\mathcal{M}^J(\alpha,\beta)$ to be the moduli space of $\jtil$-holomorphic currents\footnote{A ``holomorphic current'' is a finite linear combination of somewhere injective holomorphic curves with positive integer multiplicities; see \cite[\S3.1]{Hutchings_lectures} for details.} in $\R\times Y$ asymptotic to $\alpha$ as the $\R$ coordinate goes to $+\infty$ and asymptotic to $\beta$ as the $\R$ coordinate goes to $-\infty$. If $J$ is generic, then when $I(\alpha)-I(\beta)=1$, the moduli space $\mathcal{M}^J(\alpha,\beta)/\R$ is finite, where $\R$ acts by translation of the $\R$ coordinate on $\R\times Y$; see \cite[\S5.3]{Hutchings_lectures}.

\begin{definition}
If $J$ is generic as above, we define a differential
\[
\partial_J:\ECC_*(Y,\lambda) \longrightarrow \ECC_{*-1}(Y,\lambda)
\]
as follows. If $\alpha$ is an ECH generator, then
\[
\partial_J\alpha = \sum_{I(\alpha)-I(\beta)=1}\#\frac{\mathcal{M}^J(\alpha,\beta)}{\R} \beta
\]
where $\beta$ is an ECH generator and $\#$ denotes the mod 2 count.
\end{definition}

It is shown in \cite[Thm.\ 7.20]{HT1} that $\partial_J^2=0$. We denote the homology of the chain complex $(\ECC_*(Y,\lambda),\partial_J)$ by $\ECH_*(Y,\lambda,J)$. It was shown by Taubes \cite{taubes_iso} that $\ECH_*(Y,\lambda,J)$ is canonically isomorphic to a version of Seiberg-Witten Floer cohomology as defined by Kronheimer-Mrowka \cite{km}. As explained in \cite[Thm.\ 1.3]{cc2}, this implies that $\ECH_*(Y,\lambda,J)$ in fact depends only on the pair $(Y,\xi)$, so we can denote it by $\ECH_*(Y,\xi)$.

The case of interest for us is where $Y=S^3$ and $\xi$ is the ``standard'' contact structure (the unique tight contact structure) $\xi_0$ arising from realizing $S^3$ as a star-shaped hypersurface in $\R^4$ with the restriction of the standard Liouville form. In this case we have
\begin{equation}
\label{eqn:echs3}
\operatorname{ECH}_*(S^3,\xi_0) = \left\{\begin{array}{cl} \Z/2, & *=0,2,4,\ldots\\ 0, & \mbox{else}.
\end{array}\right.
\end{equation}
See e.g.\ the calculation in \cite[\S3.7]{Hutchings_lectures}.

There is some additional structure on ECH which we will need. First of all, the empty set is a valid ECH generator, with grading $I(\emptyset)=0$. For any $J$ we have $\partial_J\emptyset=0$, because the ECH differential decreases action, as explained in \S\ref{sec:ckech} below.  It follows from the properties of ECH cobordism maps in \cite[Thm.\ 1.9]{cc2} that the homology class $[\emptyset]\in \operatorname{ECH}_0(Y,\lambda,J)$ gives a well-defined invariant
\[
c(\xi) \in \operatorname{ECH}_0(Y,\xi).
\]
In the example \eqref{eqn:echs3}, $c(\xi_0)$ is the generator of $\operatorname{ECH}_0(S^3,\xi_0)$.

There is also a ``$U$ map''
\[
U:\operatorname{ECH}_*(Y,\xi) \longrightarrow \operatorname{ECH}_{*-2}(Y,\xi).
\]
This is induced by a chain map
\[
U_{J,y}:\ECC_*(Y,\lambda) \longrightarrow \ECC_{*-2}(Y,\lambda)
\]
which counts $\jtil$-holomorphic currrents passing through $(0,y)$ where $y\in Y$ is a point not on any Reeb orbit. See~\cite[\S3.8]{Hutchings_lectures} for details about why the $U$ map is well defined. In the example~\eqref{eqn:echs3}, the $U$ map on $\operatorname{ECH}_*(S^3,\xi_0)$ is an isomorphism whenever $*\neq 0$.

\subsection{ECH capacities}
\label{sec:ckech}

Choose a generic almost complex structure $J$ as needed to define the ECH differential $\partial_J$ on $\operatorname{ECC}_*(Y,\lambda)$. It follows from the definition of the almost complex structure $\jtil$ that if $\beta$ is another ECH generator and if $\mathcal{M}^J(\alpha,\beta)\neq\emptyset$, then $\mathcal{A}(\alpha) \ge \mathcal{A}(\beta)$, with equality only if $\alpha=\beta$. In particular, if $\beta$ appears in $\partial_J\alpha$, then
\begin{equation}
\label{eqn:decreaseaction}
\mathcal{A}(\alpha)>\mathcal{A}(\beta).
\end{equation}
It follows that for each $L\in\R$, we have a subcomplex $\operatorname{ECC}_*^L(Y,\lambda)$ of $\operatorname{ECC}_*(Y,\lambda)$, spanned by ECH generators with symplectic action less than $L$.

\begin{definition}
The {\bf filtered ECH\/}, denoted by $\operatorname{ECH}^L(Y,\lambda)$, is the homology of the chain complex $(\operatorname{ECC}_*^L(Y,\lambda),\partial_J)$.
\end{definition}

It is shown in \cite[Thm.\ 1.3]{cc2} that the filtered ECH depends only on $Y,\lambda,L$ and not on the choice of $J$. In addition, inclusion of chain complexes inducs a well-defined chain map
\[
\imath_*^L:\operatorname{ECH}_*^L(Y,\lambda) \longrightarrow \operatorname{ECH}_*(Y,\lambda).
\]

\begin{definition}
\label{def:ckech}
Let $X\subset\R^4$ be a star-shaped domain, let $Y=\partial X$, let $\lambda$ be the contact form on $Y$ given by the restriction of the standard Liouville form, and suppose that $\lambda$ is nondegenerate. If $k$ is a nonnegative integer, define the {\bf $k^{th}$ ECH capacity\/} of $X$ by
\[
c_k(X) = \inf\{L>0 \mid 0\neq \imath_{2k}^L:\operatorname{ECH}_{2k}^L(Y,\lambda)\to \operatorname{ECH}_{2k}(Y,\lambda)\}.
\]
Equivalently, if $J$ is an almost complex structure as needed to define the ECH generator $\partial_J$, then $c_k(X)$ is the minimum $L$ such that the generator of $ECH_{2k}(Y,\lambda,J)$ can be represented by a sum of ECH generators each having action at most $L$.
\end{definition}

The capacity $c_k$ has a unique $C^0$-continuous extension to the space of all star-shaped domains in $\R^4$; see \cite[\S4]{hut_qech} for explanation and for the definition of ECH capacities of more general symplectic manifolds.

\subsection{Proof of Theorem \ref{thm:Hopf_ECH}}
Let $X\subset\R^4$ be a dynamically convex domain. Let $Y=\partial X$ and let $\lambda$ denote the induced contact form on $Y$. By continuity of capacities, we can assume that $\lambda$ is nondegenerate. We now prove that $c_{\operatorname{Hopf}}(X)=c_1^{\operatorname{ECH}}(X)$ in three steps.

\subsubsection*{Step 1} We first prove that $c_{\rm Hopf}(X)\ge c_1^{\ECH}(X)$. One can quickly deduce this step from the result of Edtmair \cite{gss_edtmair} that $c_Z(X) \le \mathcal{A}_{\operatorname{Hopf}}(X)$, but we will give an independent proof.

By Theorem~\ref{main_fast_capacity}, there exists a Hopf orbit $\gamma\in\P_{\operatorname{Hopf}}(\partial X)$ satisfying $c_{\operatorname{Hopf}}(X)=\mathcal{A}(\gamma)$ and $\CZ(\gamma)=3$. It follows from equation \eqref{absolute_grading} that the ECH index $I(\gamma)=2$.

Now choose a generic almost complex structure $J$ as needed to define the ECH differential $\partial_J$, and choose a point $y\in Y$ as needed to define the chain map $U_{J,y}$. We claim that $\partial_J\gamma=0$ and $U_{J,y}\gamma=\emptyset$. Assuming this claim, it follows from the nontriviality of the class $[\emptyset]$ that $\gamma$ is a cycle in the ECH chain complex representing the generator of $\operatorname{ECH}_2(Y,\xi_0)$, so it follows from Definition~\ref{def:ckech} that $c_1^{\operatorname{ECH}}(X)\le \mathcal{A}(\gamma)$, completing the proof of Step 1.

To prepare to prove the claim, recall from Remark~\ref{rem:gss} that since $X$ is dynamically convex and $\gamma$ is a Hopf orbit, it follows that $\gamma$ is the boundary of a disk-like global surface of section $D\subset Y$. In particular, every Reeb orbit other than $\gamma$ has positive linking number with $\gamma$. Moreover, the construction of the global surface of section in \cite{hryn_jsg} obtains $\operatorname{int}(D)$ as the projection to $Y$ of a $J$-holomorphic plane in $\R\times Y$ with a positive puncture at $\gamma$.

To prove that $\partial_J\gamma=0$, we will show that if $\beta$ is an ECH generator with $I(\beta)=1$, then $\mathcal{M}^J(\gamma,\beta)=\emptyset$. Suppose to the contrary that there exists $C\in\mathcal{M}^J(\gamma,\beta)$. We can write $\beta=\beta_1^{m_1}\cdots\beta_k^{m_k}$. Since $I(\beta)\neq 0$, we must have $k>0$. By the inequality \eqref{eqn:decreaseaction}, all of the orbits $\beta_i$ are distinct from $\gamma$.

It follows from the asymptotics in Theorem~\ref{thm_asymptotics} that if $a_0$ is sufficiently large, then $C$ is transverse to $\{\pm a_0\}\times Y$, and
\[
C\cap ([a_0,\infty)\times \gamma)=C\cap((-\infty,a_0]\times \gamma)=\emptyset.
\]
Let $\eta_{\pm}$ denote the projections of $C\cap (\{\pm a_0\}\times Y)$ to $Y$.
We have
\begin{equation}
\label{eqn:lk1}
\link(\eta_{+},\gamma)-\link(\eta_{-},\gamma)=\#(C\cap ([-a_0,a_0]\times\gamma))\ge 0,    
\end{equation}
where $\#$ denotes the algebraic count of intersections, which is nonnegative by intersection positivity of holomorphic curves.

Let $\tau$ be a global trivialization of the contact structure $\xi$, and let $\nu$ denote the asymptotic eigenvalue of $u$ at $\gamma$ as in Theorem~\ref{thm_asymptotics}. Since $\nu<0$ and $\operatorname{CZ}_\tau(\gamma)=3$, it follows from the discussion in \S\ref{sec:CZ} that
\[
\operatorname{wind}_\tau(\nu)\le 1.
\]
On the other hand, it follows from the definition of self-linking number that
\[
\operatorname{link}(\eta_+,\gamma) = \operatorname{wind}_\tau(\nu) + \operatorname{sl}(\gamma).
\]
Since $\operatorname{sl}(\gamma)=-1$, we conclude that
\begin{equation}
\label{eqn:lk2}
\operatorname{link}(\eta_+,\gamma)\le 0.
\end{equation}
On the other hand, we have
\begin{equation}
\label{eqn:lk3}
\operatorname{link}(\eta_-,\gamma) = \sum_{i=1}^km_i\operatorname{link}(\beta_i,\gamma) \ge k \ge 1.
\end{equation}
Combining the inequalities \eqref{eqn:lk1}, \eqref{eqn:lk2}, and \eqref{eqn:lk3} gives $-1\ge 0$, a contradiction.

To prove that $U_{J,y}\gamma=\emptyset$, we can choose $y\in\operatorname{int}(D)$. Let $C\in\mathcal{M}^J(\gamma,\emptyset)$ be a $J$-holomorphic plane that projects to $\operatorname{int}(D)$. By translating $C$ in the $\R$ direction, we can assume that $(0,y)\in C$. We claim now that if $\beta$ is an ECH generator, if $C'\in\mathcal{M}^J(\gamma,\beta)$, and if $C\neq C'$, then $C\cap C'=\emptyset$, and in particular $(0,y)\notin C'$. This will prove that $C$ is the only curve counted by $U_{J,y}\gamma$ so that $U_{J,y}\gamma=\emptyset$.

To prove that $C\cap C'=\emptyset$, observe that by the definition of the relative intersection number in \cite[\S2.7]{ind_rev}, we have
\begin{equation}
\label{eqn:Q1}
Q_\tau(\gamma) = \#(C\cap C') - \ell_\tau(C,C'),
\end{equation}
where $\ell_\tau(C,C')$ is the asymptotic linking number of $C$ and $C'$, which is the linking of the positive ends of $C$ and $C'$ in a neighborhood of $\gamma$ with respect to the trivialization $\tau$; see \cite[Def.\ 2.9]{ind_rev} for the precise definition. Since $\tau$ is a global trivialization and $\operatorname{sl}(\gamma)=-1$, it follows from equation \eqref{eqn:slqc} that
\begin{equation}
\label{eqn:Q2}
Q_\tau(\gamma) = -1.
\end{equation}
On the other hand, it follows from the linking bound in \cite[Lem.\ 4.17]{ind_rev} that
\begin{equation}
\label{eqn:Q3}
\ell_\tau(C,C') \le 1.
\end{equation}
Combining \eqref{eqn:Q1}, \eqref{eqn:Q2}, and \eqref{eqn:Q3}, we obtain $\#(C\cap C') \le 0$. By intersection positivity, it follows that $\#(C\cap C')=0$ and $C\cap C'=\emptyset$ as desired.

\subsubsection*{Step 2} We now show that any cycle in the chain complex $(ECC_*(Y,\lambda),\partial_J)$ that is homologous to $\emptyset$ must include $\emptyset$ as a summand. For this purpose it suffices to show that if $\alpha$ is an ECH generator with $I(\alpha)=1$, then $\mathcal{M}^J(\alpha,\emptyset)=\emptyset$. One can prove this using the Fredholm index, but to prepare for the following step we will give a different proof using the ``$J_0$ index'' defined in \cite[\S6.1]{ind_rev}.

In the present context, if $\alpha=\prod_{i=1}^k\alpha_i^{m_i}$ is an ECH generator, then $J_0(\alpha)$ is an integer defined by
\begin{equation}
\label{eqn:J0def}
J_0(\alpha) = I(\alpha) -  \sum_{i=1}^k \CZ(\alpha_i^{m_i}).
\end{equation}
The key property of $J_0$ that we need is that if $\beta$ is another ECH generator with $I(\alpha)-I(\beta)\in\{1,2\}$, and if there exists $\mathcal{C}\in\mathcal{M}^J(\alpha,\beta)\neq\emptyset$ (i.e. if $\mathcal{C}$ is counted by the ECH differential or the $U$ map from $\alpha$ to $\beta$), then
\begin{equation}
\label{eqn:J0inequality}
J_0(\alpha)-J_0(\beta)\ge -1,
\end{equation}
with equality only if $\mathcal{C}$ consists of a holomorphic plane, possibly together with some trivial cylinders. This is a special case of \cite[Prop.\ 5.8]{Hutchings_lectures}.

Suppose now that $\alpha=\prod_{i=1}^k\alpha_i^{m_i}$ is an ECH generator with $I(\alpha)=1$. Since $I(\alpha)\neq 0$, we must have $k\ge 1$. It then follows from equation \eqref{eqn:J0def} and dynamical convexity that
\[
J_0(\alpha) \le -2.
\]
On the other hand $J_0(\emptyset)=0$. Then if $\mathcal{M}^J(\alpha,\emptyset)\neq \emptyset$, the inequality \eqref{eqn:J0inequality} with $\beta=\emptyset$ gives $-2\ge -1$, a contradiction.

\subsubsection*{Step 3} We now prove that $c_{\rm Hopf}(X)\le c_1^{\ECH}(X)$.

By Definition~\ref{def:ckech}, there is a cycle $\sum_jx_j$ in the ECH chain complex $(\operatorname{ECC}_*(Y,\lambda),\partial_J)$, representing the generator of $\operatorname{ECH}_2(Y,\xi_0)$, such that each summand $x_j$ is an ECH generator with ECH index $I(x_j)=2$ and action $\mathcal{A}(x_j)\le c_1^{\operatorname{ECH}}(X)$. Since $U:\operatorname{ECH}_2(Y,\xi_0)\to \operatorname{ECH}_0(Y,\xi_0)$ is an isomorphism, and since $\emptyset$ is a cycle representing the generator of $\operatorname{ECH}_0(Y,\xi_0)$, it follows that $U_{J,y}\sum_jx_j$ is homologous to $\emptyset$. By Step 2, there is some $j$ such that $\emptyset$ is a summand of $U_{J,y}x_j$, and in particular there exists $\mathcal{C}\in\mathcal{M}^J(x_j,\emptyset)$.

By the inequality \eqref{eqn:J0inequality}, we have $J_0(x_j)\ge -1$. By equation \eqref{eqn:J0def} and dynamical convexity, $x_j$ consists of a single simple Reeb orbit $\gamma$ with multiplicity $1$, and $\operatorname{CZ}(\gamma)=3$. Since $I(\gamma)=2$, it follows from equation \eqref{absolute_grading} that $\operatorname{sl}(\gamma)=-1$. Since equality holds in \eqref{eqn:J0inequality}, $\mathcal{C}$ is a holomorphic plane positively asymptotic to $\gamma$. By \cite[Prop.\ 3.4]{wh}, the projection of $\mathcal{C}$ to $Y$ is an embedding\footnote{The statement of \cite[Prop.\ 3.4]{wh} requires that $\gamma$ is elliptic, but the proof works just as well if $\gamma$ is negative hyperbolic, so it applies given that $\operatorname{CZ}(\gamma)=3$. The basic ideas in the proof go back to \cite{props2}, and a more general statement is proved in \cite[Theorem~2.6]{sief_int}. One can further show that the projection of $\mathcal{C}$ to $Y$ is the interior of a global surface of section for the Reeb flow, but we do not need this.}. Thus $\gamma$ is unknotted in $Y$, so $\gamma$ is a Hopf orbit. Then $c_{\operatorname{Hopf}}(X)\le \mathcal{A}(\gamma)$, and we know from the previous paragraph that $\mathcal{A}(\gamma) \le c_1^{\operatorname{ECH}}(X)$.


\end{document}